\journal{Linear Algebra and its Applications}
\newtheorem{Theorem}{Theorem}
\newtheorem{Lemma}{Lemma}
\newtheorem{Definition}{Definition}
\newtheorem{Remark}{Remark}
\newproof{pf}{Proof}
\newcommand{\N}{\mathbb{N}}
\newcommand{\C}{\mathbb{C}}
\newcommand{\E}{{\rm e}}
\tikzstyle{none}=[inner sep=0pt]
\tikzstyle{wh}=[circle,fill=White,draw=Black,line width=0.8 pt]
\tikzstyle{rn}=[circle,fill=Red,draw=Black,line width=0.8 pt]
\tikzstyle{gn}=[circle,fill=Lime,draw=Black,line width=0.8 pt]
\tikzstyle{yn}=[circle,fill=Yellow,draw=Black,line width=0.8 pt]
\tikzstyle{simple}=[-,draw=Black,line width=2.000]
\tikzstyle{arrow}=[-,draw=Black,postaction={decorate},decoration={markings,mark=at position .5 with {\arrow{>}}},line width=2.000]
\tikzstyle{tick}=[-,draw=Black,postaction={decorate},decoration={markings,mark=at position .5 with {\draw (0,-0.1) -- (0,0.1);}},line width=2.000]
\renewcommand{\v}[1]{{\mathbf #1}}
\DeclareMathOperator*{\diag}{diag}
\begin{document}
\begin{frontmatter}

\title{Symbol based convergence analysis in multigrid methods for saddle point problems}

\author[1]{Matthias Bolten}

\author[2]{Marco Donatelli}

\author[2]{Paola Ferrari}

\author[1]{Isabella Furci*}
\cortext[mycorrespondingauthor]{Corresponding author}
\ead{furci@uni-wuppertal.de}

\address[1]{{{School of Mathematics and Natural Sciences}}, {University of Wuppertal}, {{Wuppertal}, {Germany}}}
\address[2]{{Department of Science and high Technology}, {University of Insubria}, {{Como}, {Italy}}}

\begin{abstract}
Saddle point problems arise in a variety of applications, e.g., when solving the Stokes equations. They can be formulated such that the system matrix is symmetric, but indefinite, so the variational convergence theory that is usually used to prove multigrid convergence cannot be applied. In a 2016 paper in Numerische Mathematik Notay has presented a different algebraic approach that analyzes properly preconditioned saddle point problems, proving convergence of the Two-Grid method.

In the present paper we analyze saddle point problems where the blocks are circulant within this framework. We are able to derive sufficient conditions for convergence and provide optimal parameters for the preconditioning of the saddle point problem and for the point smoother that is used. The analysis is based on the generating symbols of the circulant blocks. Further, we show that the structure can be kept on the coarse level, allowing for a recursive application of the approach in a W- or V-cycle and proving the ``level independency'' property. Numerical results demonstrate the efficiency of the proposed method in the circulant and the Toeplitz case.
\end{abstract}

\begin{keyword}
{Multigrid methods, saddle-point systems, spectral symbol, Toeplitz-like matrices}
\end{keyword}

\end{frontmatter}

\section{Introduction}\label{sec:introduction}
Saddle point linear systems arise in different cases. One of the most important examples is the discretization of the Stokes equations that are given by
\begin{align*}
\xi \mathbf{u} - \nu \Delta \mathbf{u} + \nabla p & = \mathbf{f}, \quad \text{in $\Omega$},\\
\nabla \cdot \mathbf{u} & = 0, \quad \text{in $\Omega$},
\end{align*}
where $\Omega \subset \mathbb{R}^d$, $d = 2,3$ and suitable boundary conditions are imposed. Here, $\mathbf{u}$ represents the velocity and $p$ the pressure. They give rise to linear systems  
\begin{equation}\label{eq:linear_system}
\mathcal{A} x = b,
\end{equation}
with
\begin{equation}\label{eq:matrix_saddle}
  \mathcal{A} = \begin{bmatrix}
    A & B^T \\
    B & -C
  \end{bmatrix},
\end{equation}
where $A$ {is symmetric positive definite,} $C$ {is symmetric nonnegative definite} and $B$ has full rank. The iterative solution of saddle point problems has been studied extensively, for an introductory overview we refer to \cite{MR2168342}. Here we focus on multigrid methods. Different approaches to solve saddle point problems using multigrid exist. In most cases more powerful smoothers are used to take into account the special coupling, represented by the off-diagonal blocks in \eqref{eq:matrix_saddle}. This includes the Braess-Sarazin smoother \cite{MR1438078}, the Uzawa smoother \cite{MR833993} and the Vanka smoother \cite{MR848451}, where the latter {one} is probably the most widely used. Usually, these smoothers are applied in geometric multigrid methods. As such they are often analyzed using local Fourier analysis (LFA). An introduction to LFA can be found in \cite{MR2108045} and the analysis of block smoothers like the ones mentioned, e.g., in \cite{MR3795547,MR2840198,MR3488076}.

Instead of altering the smoother Notay has recently presented a method that applies a point-wise smoother and a coarse grid correction to \eqref{eq:linear_system} after preconditioning from the left and the right using lower and upper triangular matrices, respectively \cite{MR3439215}.

In this paper we consider the case where the matrices $A, B, C$ in \eqref{eq:matrix_saddle} are $n \times n$ circulant matrices. Multigrid for circulant matrices are well-understood and have been analyzed in \cite{Serra_Possio} and the optimality of the V-cycle has been shown in \cite{AD}. Its relation to LFA is described in \cite{D10}. Systems of PDEs, like the Stokes equations, yield block matrices. Multigrid for Toeplitz matrices with small blocks has been studied in \cite{HS2} and recently in \cite{block_multigrid2,DFFSS}. These analyses share that they are based on a variational principle, that cannot be applied here, as the considered systems do not induce a scalar product. The analysis of numerical methods for structured matrices not only is of interest when the problem to solve is posed in a rectangular domain and possesses constant coefficients. The results also carry over to the case of non-constant coefficients by means of generalized locally Toeplitz (GLT) sequences \cite{glt1,glt2}, and the developed methods can also be used on more complex domains, e.g., by using fictitious domain techniques or suitable discretizations as in \cite{MR4207181,art:BOLT21a,inp:BOLT19}.

Analyzing the system matrix in the case of circulant blocks $A, B, C$ within the framework presented in \cite{MR3439215}  we are able to derive sufficient conditions based on the symbols of the matrices such that the requirements presented there are fulfilled. The symbol-based analysis allows to choose optimal parameters for the left and right preconditioners defined in \cite{MR3439215} and for the {damped} Jacobi methods used as smoothing procedure. Further, the analysis motivates the choice of the projector in the case of $C$ approaching the zero matrix. For the multigrid case, we propose a strategy that keeps the same structure on the coarse level, allowing for a recursive application in a W- or V-cycle. For this strategy we are able to show that the degree of the polynomial that represents the generating symbol is bounded, i.e., the bandwidth of the matrices on the coarse levels is bounded, as well. Moreover, we prove the ``level independency'' property, i.e., the two-grid optimality at a generic level of the multigrid method, which ensures a robust W-cycle method. Finally, numerical tests demonstrate the efficiency of the proposed method and the validity of the theoretical analysis.

This paper is organized as follows. Section \ref{sec:preliminary} defines the notation used in the paper, in particular concerning the symbol of circulant matrices. Section \ref{sec:multigrid} is devoted to recall the main results on the convergence of multigrid methods both for circulant matrices and for the Stokes problem. In particular, Section \ref{ssec:tgm_stokes} contains the main results on the convergence of the Two-Grid method (TGM) for matrices of the form \eqref{eq:matrix_saddle}. Section \ref{sec:TGM_conv_circ} is dedicated to the theoretical analysis of the TGM convergence in terms of the generating functions and in Section \ref{sec:numerical_TGM} we present a key example showing the numerical efficiency of the derived convergence results. Section \ref{sec:procedure} extends the TGM convergence analysis to multigrid methods providing a strategy to preserve the same structure of the coefficient matrices at the coarser levels and proving the ``level independency'' property.
The numerical results in Section \ref{sec:numerical_MGM} confirm the linear convergence of the W-cycle, while in Section \ref{sect:toep} the proposed multigrid method is applied to Toeplitz matrices obtaining the same optimal convergence behavior also with the V-cycle method.
Some conclusions and future research lines are drawn in Section \ref{sect:concl}.

\section{Notation and definitions} \label{sec:preliminary}
For $X \in \C^{n \times}$, we denote by $\Lambda(X)$ the set of all the eigenvalues of $X$ and with $\rho(X)$ its spectral radius.
If  $X$ and $Y$ are Hermitian matrices, then the notation $X \le Y$ (resp. $X < Y$ ) means that $Y - X$ is a nonnegative definite (resp. positive definite) matrix. Moreover, we numerate such eigenvalues adopting the following notation
\begin{equation*}
\lambda_{\min}(X)=\lambda_1\le\lambda_2\le \dots \le \lambda_n=\lambda_{\max}(X).
\end{equation*}
If $X$ is a Hermitian positive definite (HPD) matrix, then
$\|{\v{v}}\|_{X}={\|}X^{1/2}{\v{v}}{\|}_{2}$ {(resp. $\|Y\|_{X}=\|X^{1/2}YX^{-1/2}\|_{2}$)} denotes the Euclidean norm
weighted by $X$ on $\mathbb{C}^{n}$ { (resp. on $\mathbb{C}^{n\times n}$)}. 

We denote by $I_n$ and $O_n$ the $n\times n$ identity matrix and the matrix of all zeros respectively. Moreover,  $\v{e}_{n}$ and $\v{0}_n$ are respectively the  vectors of length $n$ of all ones and zeros. When the dimension is clear from the context, we omit the subscript $n$. Given a matrix $X$, we denote {by} $D_X=\diag(X)$ the diagonal matrix having {ones} as elements on the main diagonal of $X$. 

Defining the $n$ equispaced grid points  
\[\theta_{j,n}=\frac{(j-1)2\pi}{n},\qquad j=1,\ldots,n,\] 
for the interval $[0, 2\pi)$, the matrix $\mathbb{F}_n$ is the so called Fourier matrix of order $n$ given by
 \begin{align}
 (\mathbb{F}_{n})_{i,j}=\frac{1}{\sqrt{n}} \E^{\hat{\imath}(i-1)\theta_{j,n}}, \quad i,j=1,\ldots,n.
 \end{align}

\begin{Definition}\label{def:Cir}
Let the Fourier coefficients of  a given function $f\in L^1(-\pi,\pi)$ be 
\begin{align}\label{eq:coeff_fourier}
  \hat a_j(f):=\frac1{2\pi}\int_{Q}f(\theta){\rm e}^{- \hat{\imath} j \theta} d\theta\in\mathbb{C},
  \qquad  \hat{\imath}^2=-1, \, j\in\mathbb Z.
\end{align}
Then, the ${n}$th
circulant matrix $\mathcal{C}_{n}(f)$ associated with $f$ is given by
 \begin{equation}\label{eq:Circulant_diagonalization}
 \mathcal{C}_{n}(f)=\sum_{|j|<n}\hat{a}_{j}(f)Z_{n}^{j}=\mathbb{F}_{n}  D_{n}(f) \mathbb{F}_{n}^{H},
  \end{equation} where $Z_{n}^{(j)}$ is the $n \times n$ matrix whose $(i,k)$ entry equals 1 if $\mathrm{mod}(i-k,n)=1$ and zero otherwise. 
Moreover,
\begin{equation*}
  D_{n}(f)=\diag\left(s_n(f(\theta_{j,n}))\right),\quad j=1,\ldots,n,
\end{equation*}
where 
$s_{n}(f(\theta))$ is the $ n$th Fourier sum of $f$ given by
\begin{equation*}
s_{n}(f({\theta}))= \sum_{k=1-n}^{n-1}  \hat{a}_{k}(f)
\E^{\iota k\theta}.
\end{equation*}

\end{Definition}

The set $\{\mathcal{C}_n(f)\}_{n\in\mathbb N}$ is called the \textit{family of circulant matrices generated by $f$}, that
in turn is referred to as the \textit{generating function or the symbol of $\{\mathcal{C}_n(f)\}_{n\in\mathbb N}$}. 
Note that $\rho(\mathcal{C}_n(f)) \leq \|f\|_\infty$ for all $n\in\mathbb N$.
The set of circulant matrices of the same size $n$ defines a \emph{matrix algebra} since {it} is closed by sum, product, and inversion.
From a computational point of view, allocating only the vector of the Fourier coefficients $\hat{a}_{j}(f)$, all the computations (matrix-vector product, inversion, etc.) involving the matrix $\mathcal{C}_{n}(f)$ can be computed {using the} FFT. 

\begin{Remark}\label{rmk:eig_circ_pol}
If ${f}$ is a trigonometric polynomial of fixed degree less than $n$,   the entries of $D_n(f)$ are the eigenvalues of $\mathcal{C}_n(f)$, explicitly given by sampling $f$ {using} the grid $\theta_{j,n}$:
 \begin{align*}
  \lambda_j(\mathcal{C}_n(f))&=f\left(\theta_{j,n}\right),\quad j=1,\ldots,n,\nonumber\\
   D_{n}(f)&=\diag\left(f\left(\theta_{j,n}\right)\right),\quad j=1,\ldots,n.
 \end{align*}
\end{Remark}

Given a circulant matrix $A \in \C^{n \times n}$, we denote by $f_A$ its symbol such that $A=\mathcal{C}_n(f_A)$. Moreover, if $f_A$ is a trigonometric polynomial, its degree is denoted by $z_A$ and $A$ is a band matrix with bandwidth $z_A+1$.

\section{Multigrid methods}\label{sec:multigrid}
This section collects relevant results concerning the convergence theory of  algebraic multigrid  methods. 
We first recall the approximation property and then its equivalent condition in terms of the symbol of circulant matrices. 
Next we report the proposal in \cite{MR3439215} for the Stokes problem that will be exploited in the case of circulant matrices in Section~\ref{sec:TGM_conv_circ}.
 
In the general case, we are interested in solving a linear system $X_{n}x_{n}=b_{n}$  where $X_{n}$ is HPD. Assume $k<n$ and define a full-rank rectangular matrix $P_{n,k}\in \mathbb{C}^{n\times k}$, which is used as a grid transfer operator to reduce the problem size.   

\subsection{TGM and approximation property} 
A TGM combines smoothing iterations with a coarse grid correction, which requires the solution of the error equation on a subspace of reduced dimension.  In this paper, we consider only a post smoother, which consists in a single step of the damped Jacobi method with iteration matrix $I_n-\omega D_{X_n}^{-1}X_n.$

The global iteration matrix of TGM is given by 
\begin{small}
\begin{equation*}
{\rm TGM}(X_{n},P_{n,k},\omega)=
(I_n-\omega D_{X_n}^{-1}X_n)
\left[I_{n}-P_{n,k}\left(P_{n,k}^{H}
X_{n}P_{n,k}\right)^{-1}P_{n,k}^{H}X_{n}\right].
\end{equation*}
\end{small}
The convergence results focus on the choice of $P_{n,k}$ and $\omega$ such that the spectral radius of ${\rm TGM}(X_{n},P_{n,k},\omega)$ is strictly less than 1.

Following the Ruge and St\"{u}ben approach \cite{RStub}, later generalized by Notay in \cite{MR3395388, MR3439215, MR3716588}, we introduce the so-called approximation property.

\begin{Definition}\label{def:approxprop}
Let $X_n$ be an HPD $n\times n$ matrix.  Let $P_{n,k}$ a full-rank $n\times k$ matrix, $k<n$. 
If there exists a constant ${\kappa}({X_n,P_{n,k}})\in \mathbb{R}$ such that
\begin{equation}\label{eq:approximation}
\min_{\mathbf{v}\in \mathbb{C}^{k}} \|\mathbf{u}-P_{n,k}\mathbf{v}\|^2_{D_{X_n}}\le {\kappa}({X_n},P_{n,k})\|\mathbf{u}\|^2_{X_n}, \quad \forall \, \mathbf{u}\in \mathbb{C}^{n},
\end{equation}
then the pair $(X_n,P_{n,k})$ is said to fulfil the \emph{approximation property} and the constant ${\kappa}({X_n},P_{n,k})$ is an associated approximation property constant.
\end{Definition}

Starting from the study in \cite{FS1}, many results have been given on the choice of the prolongation and restriction operators for Toeplitz and circulant systems \cite{chan1998,huckle2002}. 

Let $n$ be even, the common approach for a circulant matrix $\mathcal{C}_n(f)$, where $f$ is a nonegative trigonometric polynomial, consists in choosing the grid transfer operator 
\begin{equation*}
P_{\mathcal{C}_n(f)}=\mathcal{C}_{n}(p)K_{n}^T \in \C^{n \times \frac{n}{2}},
\end{equation*}
where the trigonometric polynomial $p$ is chosen according to Lemma \ref{lem:cristina} and the matrix
\begin{equation}\label{eq:def_cutting_matrix_Kn_circ}
K_{n}=\left[\begin{array}{cccccccc}
		1 & 0 & & & & &\\
		  &   & 1 & 0 & & & & \\
			&   &   &   & \ddots & \ddots & & \\
			&   &   &   &        &        & 1 & 0		
		\end{array}\right]_{\frac{n}{2}\times n}.
\end{equation}
is the down-sampling operator.

Using the Galerkin approach, the classical TGM convergence theorem for circulant matrices was proved in \cite{Serra_Possio} for the approximation property as formulated in \cite{RStub}. Here we prove that the same conditions satisfy the approximation property according to  Definition \ref{def:approxprop}.

\begin{Lemma}\label{lem:cristina}
Let $A=\mathcal{C}_n(f)$, with $f$ being a nonnegative trigonometric polynomial such that $f(\theta_0)=0$ and $f(\theta)>0$ for all $\theta \in [0,2\pi)$. Let $P_{A}= \mathcal{C}_n(p)K_n^T$, with $p$ satisfying:
\begin{enumerate}
\item $|p|^2(\theta)+|p|^2(\theta+\pi)>0 \quad \forall \theta\in[0,2\pi),$
\item $\limsup_{\theta\to \theta_0} \frac{|p|^2(\theta+\pi)}{f(\theta)}<c$.
\end{enumerate}
Then the pair $(A,P_A)$ fulfills the approximation property in equation (\ref{eq:approximation}) with 
\begin{equation}\label{eq:consttgm}
{\kappa}({A},P_{A}) = 2 \hat a_0(f)\left\|\frac{|p|^2(\theta+\pi)}{f(\theta)}\right\|_\infty  \left\|\frac{1}{|p|^2(\theta)+|p|^2(\theta+\pi)}\right\|_\infty.
\end{equation}
\end{Lemma}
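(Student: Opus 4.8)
The plan is to recast the approximation property as a single operator inequality, diagonalise everything by the Fourier matrix $\mathbb{F}_n$, and reduce it to a family of decoupled $2\times 2$ scalar inequalities indexed by the frequency pairs $(\theta_{j,n},\,\theta_{j,n}+\pi)$.

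First I would observe that the main diagonal of the circulant matrix $A=\mathcal{C}_n(f)$ is constant, so $D_{A}=\hat a_0(f)\,I_n$. Hence the left-hand side of (\ref{eq:approximation}) equals $\hat a_0(f)$ times the squared Euclidean distance of $\mathbf{u}$ to $\mathrm{Range}(P_A)$, and the minimiser is the orthogonal projection. Writing $\Pi=P_A(P_A^H P_A)^{-1}P_A^H$ for the orthogonal projector onto $\mathrm{Range}(P_A)$, the claim (\ref{eq:approximation}) becomes equivalent to the matrix inequality
\begin{equation*}
\hat a_0(f)\,(I_n-\Pi)\le \kappa(A,P_A)\,A .
\end{equation*}

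Next I would diagonalise. Since $A=\mathbb{F}_n D_n(f)\mathbb{F}_n^H$ and $\mathcal{C}_n(p)=\mathbb{F}_n D_n(p)\mathbb{F}_n^H$, we have $\mathbb{F}_n^H P_A=D_n(p)\,\mathbb{F}_n^H K_n^T$. A direct computation of $\mathbb{F}_n^H K_n^T$ shows that its rows $j$ and $j+n/2$ coincide, which is exactly the aliasing of the two frequencies $\theta_{j,n}$ and $\theta_{j,n}+\pi$ under down-sampling. Reordering the Fourier modes into the $n/2$ pairs $\{j,\,j+n/2\}$, the column space of $\mathbb{F}_n^H P_A$ is then the orthogonal direct sum of the one-dimensional subspaces spanned by $\mathbf{w}_j=(p(\theta_{j,n}),\,p(\theta_{j,n}+\pi))^T$, while $A$ turns block diagonal with $2\times 2$ blocks $\mathrm{diag}(f(\theta_{j,n}),\,f(\theta_{j,n}+\pi))$. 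Condition~1 guarantees $\mathbf{w}_j\neq 0$ for every $j$, hence that $P_A$ has full rank $n/2$ and $\Pi$ is well defined. Establishing this \emph{simultaneous} $2\times 2$ block structure of $\Pi$ and $A$ is the technical heart of the argument and the step I expect to cost the most care.

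The operator inequality then decouples across blocks. In block $j$ (abbreviating $\theta_j=\theta_{j,n}$), the matrix $I-\Pi$ restricts to the rank-one projector onto $\mathbf{w}_j^\perp$, and using the elementary fact that, for $D$ positive definite, $\mathbf{z}\mathbf{z}^H\le\kappa\,D$ holds iff $\mathbf{z}^H D^{-1}\mathbf{z}\le\kappa$, the per-block requirement reduces to
\begin{equation*}
\kappa_j=\frac{\hat a_0(f)}{|p(\theta_j)|^2+|p(\theta_j+\pi)|^2}\left(\frac{|p(\theta_j+\pi)|^2}{f(\theta_j)}+\frac{|p(\theta_j)|^2}{f(\theta_j+\pi)}\right)\le\kappa(A,P_A).
\end{equation*}
Finally I would bound $\max_j\kappa_j$ by the stated constant: setting $g(\theta)=|p|^2(\theta+\pi)/f(\theta)$, the $\pi$-shift identity $g(\theta+\pi)=|p|^2(\theta)/f(\theta+\pi)$ rewrites the bracket as $g(\theta_j)+g(\theta_j+\pi)\le 2\,\|g\|_\infty$, while the prefactor is at most $\left\|\frac{1}{|p|^2(\theta)+|p|^2(\theta+\pi)}\right\|_\infty$, which together yield exactly (\ref{eq:consttgm}). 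Condition~1 keeps the denominator bounded away from zero, and Condition~2 forces $p(\theta_0+\pi)=0$ at the right rate so that $\|g\|_\infty<\infty$ despite $f(\theta_0)=0$ (the singular block, when $\theta_0$ is a grid point, being handled by continuity); these two conditions are precisely what make the bounding constant finite.
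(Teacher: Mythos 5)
Your proposal is correct and follows essentially the same route as the paper's proof: fixing $\mathbf{v}$ as the orthogonal projection coefficients, rewriting the approximation property as $\hat a_0(f)(I-\Pi)\le \kappa A$, block-diagonalizing into $2\times 2$ frequency-pair blocks, and bounding each block by the stated constant via the $\pi$-shift symmetry. The only difference is one of detail: you work out explicitly the aliasing structure of $\mathbb{F}_n^H K_n^T$ and the rank-one reduction $\mathbf{z}\mathbf{z}^H\le\kappa D \Leftrightarrow \mathbf{z}^H D^{-1}\mathbf{z}\le\kappa$, whereas the paper delegates the block diagonalization to a citation and states the resulting $2\times2$ inequality directly.
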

\begin{proof}
Fixing $\v{v}=(P_A^HP_A)^{-1}P_A^H\v{u}$, the condition \eqref{eq:approximation} is implied by
\[\|\mathbf{u}-P_{n,k}\mathbf{v}\|^2_{D_A}\le {\kappa}(A,P_A)\|\mathbf{u}\|^2_A, \quad \forall \, \mathbf{u}\in \mathbb{C}^{n},\]
which is equivalent to the matrix inequality 
\[\hat a_0(f)(I-P_A(P_A^HP_A)^{-1}P_A^H ) \le {\kappa}(A,P_A) A\]
since $D_{A_n} =\hat a_0(f) I_n$.
By performing a block diagonalization of all the involved matrices (see \cite{Serra_Possio}), to have  \eqref{eq:approximation}, it is enough to prove
\begin{equation*}
{\kappa}({A},P_{A})\begin{bmatrix}
&f(\theta)& 0\\
& 0& f(\theta+\pi)\end{bmatrix} \ge \frac{\hat a_0(f)}{|p|^2(\theta)+|p|^2(\theta+\pi)}\begin{bmatrix}
&|p|^2(\theta+\pi) & -p(\theta)p(\theta+\pi)\\
& -p(\theta)p(\theta+\pi) & |p|^2(\theta)
\end{bmatrix}, 
\end{equation*}
for all grid point $\theta \in \left\{\theta_{j,n}=\frac{(j-1)2\pi}{n} \, \big| \; j=1,\dots,n\right\}$.
In conclusion, ${\kappa}({A},P_{A})$, defined as in \eqref{eq:consttgm}, satisfies 
\[{\kappa}({A},P_{A}) \ge  \frac{\hat a_0(f)}{|p(\theta)|^2+|p(\theta+\pi)|^2} 
\left( \frac{|p|^2(\theta+\pi)}{f(\theta)} + \frac{|p|^2(\theta)}{f(\theta+\pi)}\right), \qquad \forall \theta\in[0,2\pi),\]
and hence the inequality \eqref{eq:approximation} is true.
\end{proof}

\begin{Remark}\label{rem:singular}
Let $A=\mathcal{C}_n(f)$, if $f$ vanishes at a grid point $\theta_0=\theta_{j,n}=\frac{(j-1)2\pi}{n}, j=1,\dots,n$, then $A$ is singular and the approximation property in Definition \ref{def:approxprop} cannot be applied. Nevertheless, Lemma \ref{lem:cristina} still holds applying a small rank one correction to $A$ obtaining a HPD matrix as in \cite{ADS}, or the system matrix is ``naturally'' singular in the sense of \cite[section 3.2]{MR3553931} and the kernel is in the range of the prolongation.  
\end{Remark}

\subsection{TGM for the Stokes problem}\label{ssec:tgm_stokes}
Considering the Stokes problem, the TGM proposed in \cite{MR3439215} converges thanks to the following result.

\begin{Theorem}[\cite{MR3439215}]\label{thm:teo_notay}
Let 
\begin{equation}\label{eq:saddle_notay}
\mathcal{A}=	\begin{bmatrix}
		A & B^T \\
		B & -C
	\end{bmatrix}
\end{equation}
be a matrix such that $A$ is an $n\times n$ HPD matrix and $C$ is an $m\times m$ nonnegative definite matrix. Assume that $B$ has rank $m\leq n$ or that $C$ is positive definite on the null space of $B^T$. 

Let 
$\alpha$ be a positive number such that $\alpha<2(\lambda_{\max}(D_A^{-1}A))^{-1}$ and define 
\begin{equation}\label{eq:LAU}
\hat{\mathcal{A}}=\mathcal{LAU}, \qquad
\mathcal{L}= \begin{bmatrix}
I_{n}\\
\alpha B D_{A}^{-1}&-I_{m}
\end{bmatrix}, \qquad \mathcal{U}=\begin{bmatrix}
I_{n}&-\alpha D_{A}^{-1}B^T\\
&I_{m}
\end{bmatrix},
\end{equation}
and
 \begin{equation}\label{eq:Chat}
  \hat{C}=C+B(2\alpha D_A^{-1}-\alpha^2D_A^{-1}AD_A^{-1})B^T.
 \end{equation}  
Let $P_A$ and $P_{\hat{C}}$ be, respectively, $n\times k$ and $m\times \ell$ matrices of rank $k<n$ and $\ell<m$, define the prolongation 
\begin{equation}\label{eq:PNotay}
\mathcal{P}=\begin{bmatrix}
P_{A}& \\
& P_{\hat{C}}
\end{bmatrix}
\end{equation}
for the global system involving $\mathcal{\hat{A}}$ and suppose that  the pairs $(A,P_A)$ and $(\hat{C}, P_{\hat{C}})$ fulfill the approximation property~(\ref{eq:approximation}).
    
  Then, the spectral radius of the TGM iteration matrix {using} one iteration of the damped Jacobi method with relaxation parameter $\omega$ {as  post smoother} satisfies
\begin{equation}\label{eq:notay_max}
\rho({\rm TGM}(\mathcal{\hat{A}},\mathcal{P},\omega))\le \max\left(1-\frac{\omega}{{\kappa}({A},P_{A})},  \, 1-\frac{\omega}{{\kappa}({\hat{C}},P_{\hat{C}})}, \, \omega \hat{\gamma}_A-1,  \, \omega \hat{\gamma}_{\hat{C}}-1,  \, \sqrt{1-\frac{\omega(2-\omega\tilde{\gamma})}{\tilde{\kappa}}} \right),
\end{equation}
where \begin{align*}
\hat{\gamma}_{A}= \left(\alpha\left(2-\alpha\lambda_{\max}(D_{A}^{-1}A)\right)\right)^{-1},& \qquad \hat{\gamma}_{\hat{C}}= \lambda_{\max}\left(D_{\hat{C}}^{-1}(C+BA^{-1}B^T)\right),  \\
\tilde{\gamma}=\frac{2\hat{\gamma}_{A}\hat{\gamma}_{\hat{C}}}{\hat{\gamma}_{A}+\hat{\gamma}_{\hat{C}}},& \qquad \tilde{\kappa}=\frac{2{\kappa}({A},P_{A}){\kappa}({\hat{C}},P_{\hat{C}})}{{\kappa}({A},P_{A})+{\kappa}({\hat{C}},P_{\hat{C}})}.
\end{align*}

\end{Theorem}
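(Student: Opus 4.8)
The plan is to work entirely with the transformed operator $\hat{\mathcal{A}}=\mathcal{L}\mathcal{A}\mathcal{U}$ and to reduce the convergence estimate to the interplay between the block-diagonal Jacobi smoother and a block-diagonal coarse-grid correction. The first concrete step is to carry out the block product $\mathcal{L}\mathcal{A}\mathcal{U}$ explicitly: one checks that its $(1,1)$ block is $A$, its $(2,2)$ block is exactly $\hat C$ from \eqref{eq:Chat}, and its off-diagonal blocks are $\pm B(I_n-\alpha D_A^{-1}A)$, up to transposition and sign, so that $\hat{\mathcal{A}}$ has Hermitian diagonal blocks but is itself non-Hermitian. The restriction $\alpha<2(\lambda_{\max}(D_A^{-1}A))^{-1}$ is precisely what makes $M:=2\alpha D_A^{-1}-\alpha^2D_A^{-1}AD_A^{-1}$ HPD, since the smallest eigenvalue of its symmetrized middle factor $2\alpha I-\alpha^2D_A^{-1/2}AD_A^{-1/2}$ equals $\alpha(2-\alpha\lambda_{\max}(D_A^{-1}A))=\hat\gamma_A^{-1}>0$; under either hypothesis on $B$ and $C$ this forces $\hat C=C+BMB^T$ to be HPD. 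I would then record the two facts that drive the rest of the argument, namely $D_{\hat{\mathcal{A}}}=\diag(D_A,D_{\hat C})$ and $\mathcal{P}=\diag(P_A,P_{\hat C})$, so that both the Jacobi scaling and the prolongation respect the block partition even though $\hat{\mathcal{A}}$ couples the two unknowns.

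Next, writing $\mathrm{TGM}=(I-\omega D_{\hat{\mathcal{A}}}^{-1}\hat{\mathcal{A}})(I-\mathcal{P}(\mathcal{P}^H\hat{\mathcal{A}}\mathcal{P})^{-1}\mathcal{P}^H\hat{\mathcal{A}})$, I would estimate the two factors separately and recombine them, but not in the usual energy norm: because $\hat{\mathcal{A}}$ is non-Hermitian I would instead bound $\rho(\mathrm{TGM})$ by the norm of $\mathrm{TGM}$ weighted by the HPD block-diagonal part $\diag(A,\hat C)$. For the coarse-grid projector, the approximation property \eqref{eq:approximation} applied to $(A,P_A)$ and to $(\hat C,P_{\hat C})$ controls the error component in the complement of $\operatorname{range}\mathcal{P}$, and this is where $\kappa(A,P_A)$ and $\kappa(\hat C,P_{\hat C})$ enter. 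For the smoother, the relevant spectral bounds come from $\lambda_{\max}(D_A^{-1}A)$ (which, together with $\alpha$, produces $\hat\gamma_A$) and from the HPD Schur-complement quantity $C+BA^{-1}B^T$, giving $\hat\gamma_{\hat C}=\lambda_{\max}(D_{\hat C}^{-1}(C+BA^{-1}B^T))$. Treating each diagonal block in isolation — coarse correction on the low modes, damped Jacobi on the high modes — reproduces the Hermitian two-grid estimates and yields the four contributions $1-\omega/\kappa(A,P_A)$, $1-\omega/\kappa(\hat C,P_{\hat C})$, $\omega\hat\gamma_A-1$ and $\omega\hat\gamma_{\hat C}-1$.

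The genuinely new ingredient, and the step I expect to be the main obstacle, is the off-diagonal coupling $\pm B(I_n-\alpha D_A^{-1}A)$: it prevents both the smoother and the Galerkin operator $\mathcal{P}^H\hat{\mathcal{A}}\mathcal{P}$ from decoupling, so the per-block estimates above cannot simply be taken in parallel. I would handle this by restricting the weighted-norm estimate to the interaction subspace and reducing it, mode by mode, to a family of $2\times2$ quadratic problems coupling one velocity component with one pressure component; optimizing the resulting quadratic form over that subspace produces the fifth term $\sqrt{1-\omega(2-\omega\tilde\gamma)/\tilde\kappa}$. The harmonic means $\tilde\gamma=2\hat\gamma_A\hat\gamma_{\hat C}/(\hat\gamma_A+\hat\gamma_{\hat C})$ and $\tilde\kappa=2\kappa(A,P_A)\kappa(\hat C,P_{\hat C})/(\kappa(A,P_A)+\kappa(\hat C,P_{\hat C}))$ are exactly what emerge when one balances the two blocks in this reduction. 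The delicate point throughout is that the exact value of $\alpha$ (entering through $\hat\gamma_A$) must be used to keep the cross terms small enough that the coupling does not overwhelm the individual block contractions; only then does taking the maximum of the five quantities over all modes deliver the bound \eqref{eq:notay_max}.
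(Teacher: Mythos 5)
You should first be aware that the paper does not prove this statement at all: Theorem \ref{thm:teo_notay} is quoted from Notay \cite{MR3439215}, so your attempt has to be measured against the proof in that reference. Your preparatory steps are correct and do match the ingredients used there: the product $\mathcal{L}\mathcal{A}\mathcal{U}$ has $(1,1)$ block $A$, $(2,2)$ block $\hat{C}$ as in \eqref{eq:Chat}, and off-diagonal blocks $\pm B(I_n-\alpha D_A^{-1}A)$ up to transposition, so the Hermitian part of $\hat{\mathcal{A}}$ is exactly $\diag(A,\hat{C})$; the condition $\alpha<2\left(\lambda_{\max}(D_A^{-1}A)\right)^{-1}$ does make $2\alpha D_A^{-1}-\alpha^2D_A^{-1}AD_A^{-1}$ HPD with the identity $\alpha\left(2-\alpha\lambda_{\max}(D_A^{-1}A)\right)=\hat{\gamma}_A^{-1}$, and $\hat{C}$ is then HPD under either hypothesis on $B$ and $C$.

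The genuine gap is precisely the step you flag as the main obstacle, and the mechanism you propose for it would not work. First, you replace the spectral radius by the $\diag(A,\hat{C})$-weighted operator norm. The inequality $\rho(X)\le\|X\|$ is of course valid for any norm, but you would then have to prove that this norm of the TGM iteration matrix satisfies \eqref{eq:notay_max}, and there is no reason it should: the iteration matrix here is non-normal, and \eqref{eq:notay_max} is intrinsically an eigenvalue statement. Notay's argument analyzes the spectrum directly: an eigenvalue of the iteration matrix either coincides with an eigenvalue of one of the two decoupled Hermitian two-grid problems (yielding the four terms $1-\omega/\kappa(A,P_A)$, $1-\omega/\kappa(\hat{C},P_{\hat{C}})$, $\omega\hat{\gamma}_A-1$, $\omega\hat{\gamma}_{\hat{C}}-1$), or it is a root of a real quadratic equation that couples the two blocks, whose complex-conjugate roots have modulus bounded by the fifth term --- the square root in $\sqrt{1-\omega(2-\omega\tilde{\gamma})/\tilde{\kappa}}$ is the modulus of such a conjugate pair, i.e.\ the square root of the constant coefficient of that quadratic, and the harmonic means $\tilde{\gamma}$, $\tilde{\kappa}$ arise from its coefficients. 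A norm estimate cannot reproduce this structure. Second, your concrete device for the coupling term, a ``mode by mode'' reduction to $2\times 2$ problems pairing one velocity with one pressure component, presupposes a simultaneous diagonalization of $A$, $B$, $C$, $D_A$, $P_A$ and $P_{\hat{C}}$. Such a common eigenbasis exists in the circulant setting of Section \ref{sec:TGM_conv_circ} (everything is diagonalized by the Fourier matrix), but Theorem \ref{thm:teo_notay} is a purely algebraic statement for arbitrary HPD $A$, nonnegative definite $C$ and full-rank $B$, where no mode decomposition is available. So your proposal sets the stage correctly, but the central inequality \eqref{eq:notay_max} is never derived, and the route sketched for deriving it fails in the generality in which the theorem is stated.
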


The transformed linear system with coefficient matrix $\hat{\mathcal{A}}$ defined in \eqref{eq:LAU} allows to study separately the approximation property for the two matrices $A$ and $  \hat{C}$, which is HPD thanks to the choice of $\alpha$. All 
$\alpha\in\left(0,2/\lambda_{\max}(D_A^{-1}A)\right)$ ensure the convergence of TGM, but $ \hat{\gamma}_{A}$, and hence $\rho({\rm TGM}(\mathcal{\hat{A}},\mathcal{P},\omega))$, depends on $\alpha$. A good compromise is to choose  $\alpha \approx \lambda_{\max}(D_A^{-1}A)^{-1}$ which minimizes  $\hat{\gamma}_{A}$.
 On the other hand, {for} fixed $\alpha$, the relaxation parameter $\omega$ of the post smoother should be chosen in order to minimize the bound on
 $\rho({\rm TGM}(\mathcal{\hat{A}},\mathcal{P},\omega))$, i.e., the maximum in inequality \eqref{eq:notay_max}.
 
\subsection{Multigrid methods}

The TGM is useful for practical and preliminary convergence analysis of a multigrid method, but in practical applications, even the coarser problem is too large to be solved directly. Hence, a simple strategy is to apply recursively the same algorithm at the coarser error equation instead of solving it directly. Such {a} recursive application, until a small size problem is obtained, is known as V-cycle. A more robust multigrid method can be obtained concatenating two recursive calls resulting in the so-called W-cycle. 

Since the solutions of the error equations at the coarser levels are only approximated, the TGM convergence study is necessary but not sufficient to have a robust multigrid method. A more robust {result}, known as \emph{level independence}, is {obtained by} applying a TGM at a generic level of the multigrid hierarchy. This allows to obtain a linear convergence rate for the W-cycle but it is still not enough for the V-cycle {\cite{ADS,RStub}}.
Therefore further algebraic tools have been introduced for proving the V-cycle optimality, see e.g. {\cite{napov2011,RStub}}.

For circulant matrices, using the Galerkin approach, the following lemma states that the circulant structure is preserved at the coarser levels with a symbol depending explicitly on the restriction $K_{n}\mathcal{C}_{n}(p_1)^H$ and the prolongation $\mathcal{C}_{n}(p_2)K_{n}^T$.
 
\begin{Lemma}\cite[Proposition 6]{D10}\label{lem:f_coarse} 
Let $f$ be a trigonometric polynomial. Let $K_{n}$ be defined as in formula (\ref{eq:def_cutting_matrix_Kn_circ}). Then the matrix
$\left(\mathcal{C}_{n}(p_1)K_{n}^T\right)^H\mathcal{C}_{n}(f)\left(\mathcal{C}_{n}(p_2)K_{n}^T\right)\in\mathbb{C}^{k\times k}$, $k=\frac{n}{2}$,
coincides with $\mathcal{C}_{k}(\hat{f})$ where
\begin{align}\label{eq:f_coarse}
  \hat{f}(\theta)=\frac{1}{2}\left(\overline{p}_1 f p_2\left(\frac{\theta}{2}\right)+
	\overline{p}_1 f p_2\left(\frac{\theta}{2}+\pi\right)\right).
\end{align} 
Moreover, if $\theta_0 \in [0,2\pi)$ is a zero of $f$ and the projectors $p_1$ and $p_2$ satisfy the conditions 1 and 2 in Lemma \ref{lem:cristina}, then $\hat{\theta}_0=2\theta_0 \, {\rm mod} \, 2\pi$ is a zero of $\hat{f}$ and the two zeros have the same order.
\end{Lemma}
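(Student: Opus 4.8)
The plan is to split the statement into two independent tasks: first derive the closed form for the coarse symbol $\hat f$, and then separately locate the zero of $\hat f$ and track its order. For the symbol I would exploit that circulant matrices of a fixed size form an algebra with $\mathcal{C}_n(p_1)^H=\mathcal{C}_n(\overline{p_1})$ and $\mathcal{C}_n(g_1)\mathcal{C}_n(g_2)=\mathcal{C}_n(g_1g_2)$. This collapses the triple product into a single down-sampled circulant,
\[
\left(\mathcal{C}_n(p_1)K_n^T\right)^H \mathcal{C}_n(f)\left(\mathcal{C}_n(p_2)K_n^T\right)=K_n\,\mathcal{C}_n(g)\,K_n^T,\qquad g:=\overline{p_1}\,f\,p_2,
\]
so the only structural fact left to prove is how $K_n(\cdot)K_n^T$ acts on a single circulant.

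Next I would compute entries directly. Because $(\mathcal{C}_n(g))_{p,q}$ depends only on $p-q \bmod n$ and $K_n$ selects the odd-indexed rows and columns $2i-1$, the $(i,i')$ entry of $K_n\mathcal{C}_n(g)K_n^T$ is the $2(i-i')\bmod n$ Fourier coefficient of $g$; since this depends on $i-i'$ only, the down-sampled matrix is again a $k\times k$ circulant. To read off its symbol I extract the surviving coefficients: writing $g(\phi)=\sum_r \hat a_r(g)\E^{\hat{\imath} r\phi}$ and using $1+\E^{\hat{\imath} r\pi}=1+(-1)^r$, the combination $\tfrac12\big(g(\tfrac{\theta}{2})+g(\tfrac{\theta}{2}+\pi)\big)$ annihilates the odd-indexed coefficients and re-indexes $r=2s$, giving $\hat a_s(\hat g)=\hat a_{2s}(g)$, which matches the entry formula. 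Hence $K_n\mathcal{C}_n(g)K_n^T=\mathcal{C}_k(\hat g)$ with $\hat g=\hat f$ exactly as in \eqref{eq:f_coarse}. (Equivalently one can block-diagonalize $K_n\mathbb{F}_n$ against $\mathbb{F}_k$ as in Lemma \ref{lem:cristina}, but the coefficient bookkeeping is the cleaner route.)

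For the zero I would evaluate $\hat f$ at $\hat\theta_0=2\theta_0$, where $\tfrac{\theta}{2}=\theta_0$ and $\tfrac{\theta}{2}+\pi=\theta_0+\pi$. Conditions 1 and 2 of Lemma \ref{lem:cristina} are the essential input: condition 2 forces $p_1(\theta_0+\pi)=p_2(\theta_0+\pi)=0$ (their squared moduli must vanish at least as fast as $f$ at $\theta_0$), and then condition 1 yields $p_1(\theta_0),p_2(\theta_0)\neq 0$. Consequently both $g(\theta_0)$ and $g(\theta_0+\pi)$ vanish, so $\hat f(\hat\theta_0)=0$; the relation $\hat\theta_0=2\theta_0 \bmod 2\pi$ reflects that halving the grid doubles the frequency variable.

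The order is the delicate step and I expect it to be the main obstacle. Writing $\mu=2q$ for the (necessarily even, as $f\ge 0$) order of the zero of $f$ at $\theta_0$, I would expand $\hat f(\hat\theta_0+2t)=\tfrac12\big(g(\theta_0+t)+g(\theta_0+\pi+t)\big)$. The first term behaves like $\tfrac12\,\overline{p_1(\theta_0)}\,p_2(\theta_0)\,f(\theta_0+t)$, a zero of order exactly $\mu$ with nonzero leading coefficient; the second term has order $\ge \mu$, since $p_1,p_2$ each vanish at $\theta_0+\pi$ to order $\ge q$ while $f(\theta_0+\pi)>0$. Thus $\hat f$ vanishes to order at least $\mu$, and the remaining difficulty is to rule out cancellation of the two $t^{\mu}$ coefficients (which would raise the order and break the ``same order'' claim). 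This is resolved cleanly in the symmetric Galerkin case $p_1=p_2=p$ relevant to the diagonal blocks: there $g=|p|^2 f\ge 0$, so $\hat f$ is a sum of two nonnegative functions whose leading coefficients are nonnegative and therefore cannot cancel, forcing the order of $\hat f$ at $\hat\theta_0$ to equal the minimum of the two orders, namely $\mu$. For genuinely distinct $p_1,p_2$ one would instead verify directly that the leading coefficient $\tfrac12\,\overline{p_1(\theta_0)}p_2(\theta_0)c_f$ of the first term is not annihilated by the second, completing the proof.
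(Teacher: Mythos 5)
The paper itself contains no proof of this statement---it is imported verbatim from \cite[Proposition 6]{D10}---so your argument must stand on its own. Its first two parts do. Collapsing the triple product into $K_n\mathcal{C}_n(g)K_n^T$ with $g=\overline{p}_1fp_2$ via the algebra property, observing that sampling a circulant on the odd rows and columns yields a $k\times k$ circulant, and matching $\hat a_s(\hat f)=\hat a_{2s}(g)$ with the fold $\frac12\bigl(g(\frac{\theta}{2})+g(\frac{\theta}{2}+\pi)\bigr)$ is a correct, elementary coefficient-level derivation of \eqref{eq:f_coarse} (implicitly you need $\deg g<n$ so that truncation and aliasing do not interfere, which holds for fixed polynomials and $n$ large; the block-diagonalization route you mention in passing is the one used in the proof of Lemma~\ref{lem:cristina}). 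The zero location at $\hat\theta_0=2\theta_0$ and the order argument in the Galerkin case $p_1=p_2$ are also sound: there $g=|p|^2f\ge0$, the two branches are nonnegative analytic functions of orders exactly $\mu$ and at least $\mu$, and nonnegativity forbids cancellation, so the sum has order exactly $\mu$.

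The case $p_1\neq p_2$, which you defer (``verify directly that the leading coefficient is not annihilated''), is however not a loose end that can be tightened: the order-preservation claim is false under the stated hypotheses. Take $f(\theta)=2-2\cos\theta$, with a zero of order $2$ at $\theta_0=0$, and
\begin{equation*}
p_1(\theta)=1+\E^{\hat\imath\theta},\qquad p_2(\theta)=\E^{\hat\imath\theta}\left(1+\E^{\hat\imath\theta}\right).
\end{equation*}
Both satisfy conditions 1 and 2 of Lemma~\ref{lem:cristina}, since $|p_1|^2(\theta)=|p_2|^2(\theta)=2+2\cos\theta$ and $|p_i|^2(\theta+\pi)=f(\theta)$. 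Yet
\begin{equation*}
g(\theta)=\overline{p}_1(\theta)f(\theta)p_2(\theta)=\left(1+\E^{\hat\imath\theta}\right)^2\left(2-2\cos\theta\right)=2\E^{\hat\imath\theta}-\E^{3\hat\imath\theta}-\E^{-\hat\imath\theta}
\end{equation*}
contains only odd harmonics, so \eqref{eq:f_coarse} gives $\hat f\equiv 0$: the coarse symbol vanishes identically and no order is preserved. (A real-valued example: $p_1=1+\cos\theta+\sin\theta$, $p_2=1+\cos\theta-\sin\theta$ gives $g=\cos\theta-\cos3\theta$ and again $\hat f\equiv0$.) So the exact cancellation you worried about genuinely occurs. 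This does not undermine the paper, because the ``same order'' part of the lemma is only ever invoked with $p_1=p_2$ (for $f_{A\{\ell+1\}}$ and $f_{C\{\ell+1\}}$ in Lemma~\ref{lem:Notay_symbol_wcycle_C}), while for the mixed-projector block $B\{\ell+1\}$ only the symbol formula \eqref{eq:f_coarse} is used and the behavior at the zero is established separately, under extra hypotheses, in Lemma~\ref{lem:Notay_symbol_wcycleb}. In short, your proof covers exactly the cases in which the statement is true and is actually applied; for the statement as literally transcribed, with the conditions imposed on $p_1$ and $p_2$ separately, no proof exists.
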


The properties of the coarser symbols are crucial to study the multigrid convergence. Indeed, thanks to Lemma \ref{lem:f_coarse}, it is possible to prove the level independence under the same TGM assumptions in Lemma~\ref{lem:cristina}. While for the V-cycle optimality, the condition~2 on the projector  has to be replaced with the stronger condition $\limsup_{\theta\to \theta_0} \frac{p(\theta+\pi)}{f(\theta)}<c$ (see~\cite{ADS}). 

For saddle point problems, at least to our knowledge, there are no results in the literature that extend the general algebraic TGM analysis in Theorem~\ref{thm:teo_notay} to multigrid methods. Therefore, in Section~\ref{sec:procedure}, we will provide a multigrid analysis in the case of circulant blocks. 

\section{TGM convergence for saddle point matrices with circulant blocks}\label{sec:TGM_conv_circ}

In this section, we prove how to take advantage of the result in Theorem \ref{thm:teo_notay} in the case where the blocks in the saddle-point problem~\eqref{eq:linear_system}-\eqref{eq:matrix_saddle} involve circulant matrices. In this case, the following preliminary lemma is useful to obtain an HPD matrix $\hat{C}$ in \eqref{eq:Chat}.

\begin{Lemma}\label{lem:alpha}
Let $A=\mathcal{C}_n(f)$ with $f\ge0$. If $0<\alpha< \frac{2 \hat{a}_0(f)}{\|f\|_\infty}$, then
the matrix $2\alpha D^{-1}_{A}-\alpha^2D^{-1}_A A D^{-1}_A $ is HPD.  
\end{Lemma}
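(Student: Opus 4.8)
The plan is to exploit the single structural fact that makes the whole statement work: a circulant matrix has constant diagonal equal to its zeroth Fourier coefficient, so $D_A$ is a scalar multiple of the identity and the matrix in question collapses to a polynomial in $A$.

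First I would observe that, by Definition~\ref{def:Cir}, the only term of $A=\mathcal{C}_n(f)=\sum_{|j|<n}\hat a_j(f)Z_n^{j}$ contributing to the main diagonal is the one with $j=0$, since $Z_n^{0}=I_n$ and every shift $Z_n^{j}$ with $j\neq 0$ has a zero diagonal. Hence $D_A=\hat a_0(f)\,I_n$. Because $f\ge 0$ (and $f\not\equiv 0$), the expression \eqref{eq:coeff_fourier} gives $\hat a_0(f)=\tfrac{1}{2\pi}\int_Q f(\theta)\,d\theta>0$, so $D_A$ is invertible with $D_A^{-1}=\hat a_0(f)^{-1}I_n$. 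Substituting this reduces the matrix of interest to
\begin{equation*}
2\alpha D_A^{-1}-\alpha^2 D_A^{-1}AD_A^{-1}=\frac{2\alpha}{\hat a_0(f)}\,I_n-\frac{\alpha^2}{\hat a_0(f)^2}\,A .
\end{equation*}

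Next I would diagonalize. Since $f$ is real, $A$ is Hermitian with real eigenvalues, and by \eqref{eq:Circulant_diagonalization} it is diagonalized as $A=\mathbb{F}_n D_n(f)\mathbb{F}_n^{H}$. The matrix above, being a real polynomial in $A$, is therefore Hermitian and diagonalized by the same $\mathbb{F}_n$, with eigenvalues
\begin{equation*}
\mu_j=\frac{2\alpha}{\hat a_0(f)}-\frac{\alpha^2}{\hat a_0(f)^2}\,\lambda_j,\qquad \lambda_j\in\Lambda(A).
\end{equation*}
Thus HPD-ness is equivalent to proving $\mu_j>0$ for every $j$.

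Finally I would check positivity of the $\mu_j$. Multiplying $\mu_j>0$ by the positive factor $\hat a_0(f)^2/\alpha$, this is equivalent to $\alpha\lambda_j<2\hat a_0(f)$. Using $\rho(A)\le\|f\|_\infty$, hence $\lambda_j\le\|f\|_\infty$, together with the hypothesis $0<\alpha<2\hat a_0(f)/\|f\|_\infty$, one obtains $\alpha\lambda_j\le\alpha\|f\|_\infty<2\hat a_0(f)$, as required. The argument is elementary once the constant-diagonal observation is made; the only point needing attention is that the bound on $\alpha$ is precisely the one controlling the worst case $\lambda_j=\lambda_{\max}(A)\le\|f\|_\infty$, while any negative eigenvalue of $A$ only increases $\mu_j$ and is therefore harmless.
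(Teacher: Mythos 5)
Your proof is correct and takes essentially the same approach as the paper's: both rest on the observation that $D_A=\hat a_0(f)\,I_n$ with $\hat a_0(f)>0$, so that the matrix in question is the circulant generated by $\frac{2\alpha}{\hat a_0(f)}-\frac{\alpha^2}{\hat a_0(f)^2}f(\theta)$, whose positivity follows from the bound on $\alpha$. The only difference is presentational: the paper argues at the symbol level, while you spell out the diagonalization and the use of $\rho(A)\le\|f\|_\infty$ explicitly.
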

\begin{proof}
Since $A=\mathcal{C}_n(f)$ with $f\ge0$, then $D^{-1}_A=\frac{1}{ \hat{a}_0(f)} I_n$ with $ \hat{a}_0(f)>0$. 
Moreover, the matrix $2\alpha D^{-1}_{A}-\alpha^2D^{-1}_A A D^{-1}_A$ is circulant and generated by the real function 
$g(\theta)= \frac{2\alpha}{\hat{a}_0(f)}-\frac{\alpha^2}{\hat{a}^2_0(f)} f(\theta)$, which is positive for all $\theta$
if $\alpha \in \big(0,\frac{2 \hat{a}_0(f)}{f(\theta)}\big)$.
\end{proof}


The following theorem provides a deeper analysis of Theorem \ref{thm:teo_notay} using the symbol analysis of circulant matrices. In particular, it allows to define optimal projectors and smoothers as numerically confirmed in the example in Subsection~\ref{sec:numerical_TGM}. This is the first step towards a multigrid analysis provided in Section~\ref{sec:procedure}.

\begin{Theorem}\label{thm:Notay_symbol}
Let us define the matrix 
\begin{equation}\label{eq:saddle_notaycirco}
\mathcal{A}=	\begin{bmatrix}
		A & B^T \\
		B & -C
	\end{bmatrix}_{2n\times 2n},
\end{equation}
where
\begin{itemize}
\item  $A=\mathcal{C}_n(f_A)$ with $f_A$ trigonometric polynomial such that $f_A(\theta_0)=0$ and $f_A(\theta)>0$ for all $\theta\in [0,2\pi]\setminus \{\theta_0\} $;
\item  $C=\mathcal{C}_n(f_C)$ with $f_C$ nonnegative trigonometric polynomial;
\item $B=\mathcal{C}_n(f_B)$ with  $f_{B}$  trigonometric polynomial such that  $|f_{B}|^2({\theta}_0)=0$ and $|f_{B}|^2(\theta)>0$ for all 
$\theta\in [0,2\pi]\setminus \{\theta_0\} $,  and $\limsup_{\theta \to \theta_0} \frac{|f_{B}|^2(\theta)}{f_A(\theta)}< \infty$.
\end{itemize}
Let $\alpha$ be a positive number such that $\alpha< \frac{2 \hat{a}_0(f_A)}{\|f_A\|_\infty}$ and define $\mathcal{\hat{A}}=\mathcal{LAU}$ such as in Theorem \ref{thm:teo_notay}.
Let $\hat{C}$ be defined as in \eqref{eq:Chat}, then $\hat{C}=\mathcal{C}_n(f_{\hat{C}})$ with 
\begin{equation}\label{eq:f_Chat} 
f_{\hat{C}}(\theta)=f_{C}(\theta) + \frac{\alpha |f_B|^2(\theta)}{\hat{a}_0(f_A)} \left(2- \frac{\alpha}{\hat{a}_0(f_A)} f_A(\theta) \right).
\end{equation}
Let $\mathcal{P}$ be defined as in \eqref{eq:PNotay}, where
$P_A=\mathcal{C}_n(p_A)K_n^T$ and $ P_{\hat{C}}=\mathcal{C}_n(p_{\hat{C}})K_n^T$, with $p_A$ and $p_{\hat{C}}$ trigonometric polynomials.
  
   Consider a TGM associated with one iteration of damped Jacobi as postsmoothing with relaxation parameter $\omega$. If
\begin{enumerate}
\item $|p_A|^2(\theta)+ |p_A|^2(\theta+\pi)>0$ and $|p_{\hat{C}}|^2(\theta)+ |p_{\hat{C}}|^2(\theta+\pi)>0$  for all $\,\theta\in [0,2\pi];$ 
\item $\lim\sup_{\theta\to \theta_0} \frac{|p_A|^2(\theta+\pi)}{f_A(\theta)}<\infty$ and $\lim\sup_{\theta\to {\theta_0}} \frac{|p_{\hat{C}}|^2(\theta+\pi)}{f_{\hat{C}}(\theta)}<\infty;$
\item $\omega<2 \min \left( 2\alpha -\frac{\alpha^2}{\hat{a}_0(f_A)}\|f_A\|_\infty\, ,  \, \hat{a}_0(f_{\hat{C}})\left\|f_C+\frac{|f_B|^2}{f_A}\right\|_\infty^{-1}\right) $.

\end{enumerate}
Then, 
\[\rho({\rm TGM}(\mathcal{\hat{A}},\mathcal{P},\omega))<1.\]
\end{Theorem}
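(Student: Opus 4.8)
The plan is to reduce the claim to Theorem~\ref{thm:teo_notay} by checking its hypotheses and then showing that each of the five entries inside the maximum in the bound \eqref{eq:notay_max} is strictly smaller than $1$. First I would record that, since $A=\mathcal{C}_n(f_A)$ is circulant, $D_A=\hat a_0(f_A)I_n$, so the bound $\alpha<\tfrac{2\hat a_0(f_A)}{\|f_A\|_\infty}$ is exactly the hypothesis of Lemma~\ref{lem:alpha}; hence $2\alpha D_A^{-1}-\alpha^2D_A^{-1}AD_A^{-1}$ is HPD, and together with $C\ge 0$ and $B$ of full rank this makes $\hat C$ in \eqref{eq:Chat} HPD. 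Using that the circulant matrices form a commutative algebra closed under sum and product, with $B^T=\mathcal{C}_n(\overline{f_B})$ and $D_A^{-1}=\hat a_0(f_A)^{-1}I_n$, the product $B\bigl(2\alpha D_A^{-1}-\alpha^2D_A^{-1}AD_A^{-1}\bigr)B^T$ has symbol $|f_B|^2\bigl(\tfrac{2\alpha}{\hat a_0(f_A)}-\tfrac{\alpha^2}{\hat a_0^2(f_A)}f_A\bigr)$, so reading off the symbol of $\hat C$ gives precisely \eqref{eq:f_Chat}. When $\theta_0$ is a grid point $A$ (and possibly $\hat C$) is singular; there I would invoke Remark~\ref{rem:singular}, interpreting $A,\hat C$ as HPD through the rank-one correction or the naturally singular setting, so that Theorem~\ref{thm:teo_notay} applies.

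Next I would establish the two approximation properties via Lemma~\ref{lem:cristina}. For $(A,P_A)$ the hypotheses match directly: $f_A\ge 0$ has its unique zero at $\theta_0$, and conditions~1--2 of the present theorem are exactly conditions~1--2 of Lemma~\ref{lem:cristina} for the pair $(f_A,p_A)$, giving the finite constant $\kappa(A,P_A)$ from \eqref{eq:consttgm}. For $(\hat C,P_{\hat C})$ I must first check that $f_{\hat C}$ is nonnegative with a zero only at $\theta_0$: the factor $2-\tfrac{\alpha}{\hat a_0(f_A)}f_A$ is strictly positive everywhere by the $\alpha$-bound, and $|f_B|^2\ge 0$ vanishes only at $\theta_0$, so the second summand of $f_{\hat C}$ is positive off $\theta_0$; combined with $f_C\ge 0$ this yields $f_{\hat C}\ge 0$ with at most the single zero $\theta_0$. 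Applying Lemma~\ref{lem:cristina} to $(f_{\hat C},p_{\hat C})$ (or, if $f_{\hat C}(\theta_0)>0$, noting that $\hat C$ is then well conditioned) and using conditions~1--2 gives a finite $\kappa(\hat C,P_{\hat C})$.

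With both approximation properties in hand, the bound \eqref{eq:notay_max} applies and I analyze its five terms. The first two, $1-\omega/\kappa(A,P_A)$ and $1-\omega/\kappa(\hat C,P_{\hat C})$, are below $1$ because $\omega>0$ and both constants are positive. For the next two I translate the eigenvalue quantities into symbol norms: since $D_A^{-1}A=\mathcal{C}_n(f_A/\hat a_0(f_A))$ one has $\lambda_{\max}(D_A^{-1}A)\le\|f_A\|_\infty/\hat a_0(f_A)$, so $2/\hat\gamma_A=2\alpha\bigl(2-\alpha\lambda_{\max}(D_A^{-1}A)\bigr)\ge 2\bigl(2\alpha-\tfrac{\alpha^2}{\hat a_0(f_A)}\|f_A\|_\infty\bigr)$; likewise $C+BA^{-1}B^T=\mathcal{C}_n\!\bigl(f_C+|f_B|^2/f_A\bigr)$ and $D_{\hat C}=\hat a_0(f_{\hat C})I_n$, whence $\hat\gamma_{\hat C}\le\tfrac{1}{\hat a_0(f_{\hat C})}\bigl\|f_C+|f_B|^2/f_A\bigr\|_\infty$ and $2/\hat\gamma_{\hat C}\ge 2\hat a_0(f_{\hat C})\bigl\|f_C+|f_B|^2/f_A\bigr\|_\infty^{-1}$. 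Thus condition~3 forces $\omega<2/\hat\gamma_A$ and $\omega<2/\hat\gamma_{\hat C}$, i.e.\ $\omega\hat\gamma_A-1<1$ and $\omega\hat\gamma_{\hat C}-1<1$; positivity of the two bounds (nonemptiness of the $\omega$-interval) for the first term is again exactly the $\alpha$-bound. Finally, since $\tilde\gamma=\tfrac{2\hat\gamma_A\hat\gamma_{\hat C}}{\hat\gamma_A+\hat\gamma_{\hat C}}$ is a harmonic mean one has $\tilde\gamma\le\max(\hat\gamma_A,\hat\gamma_{\hat C})$, so $\omega<\min(2/\hat\gamma_A,2/\hat\gamma_{\hat C})=2/\max(\hat\gamma_A,\hat\gamma_{\hat C})\le 2/\tilde\gamma$, giving $\omega(2-\omega\tilde\gamma)>0$ and hence the last radicand below $1$. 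All five entries are then $<1$, so $\rho(\mathrm{TGM}(\hat{\mathcal A},\mathcal P,\omega))<1$.

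The routine part is the circulant symbol algebra. I expect the main obstacle to be the passage from the eigenvalue quantities $\lambda_{\max}(D_A^{-1}A)$ and $\hat\gamma_{\hat C}$ of Theorem~\ref{thm:teo_notay} to the $L^\infty$ symbol norms appearing in condition~3 — in particular guaranteeing $\bigl\|f_C+|f_B|^2/f_A\bigr\|_\infty<\infty$, which is precisely where the hypothesis $\limsup_{\theta\to\theta_0}|f_B|^2/f_A<\infty$ is needed to keep the Schur-complement symbol bounded at the singular frequency $\theta_0$ — together with the harmonic-mean domination showing that the coupled $\tilde\gamma$-term is controlled by the two decoupled bounds, so condition~3 alone suffices. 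A secondary technical point is handling the singular case $\theta_0=\theta_{j,n}$ consistently through Remark~\ref{rem:singular} so that the HPD framework of Theorem~\ref{thm:teo_notay} remains applicable.
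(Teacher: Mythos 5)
Your proposal is correct and follows essentially the same route as the paper's proof: reduce to Theorem~\ref{thm:teo_notay} by establishing that $\hat{C}$ is HPD via Lemma~\ref{lem:alpha} and that both pairs satisfy the approximation property via Lemma~\ref{lem:cristina}, and then bound each of the five terms in \eqref{eq:notay_max} by translating $\lambda_{\max}(D_A^{-1}A)$ and $\hat\gamma_{\hat C}$ into symbol sup-norms, with the hypothesis $\limsup_{\theta\to\theta_0}|f_B|^2/f_A<\infty$ keeping $\bigl\|f_C+|f_B|^2/f_A\bigr\|_\infty$ finite. The only (harmless) deviation is in the last term: you bound the harmonic mean by $\tilde\gamma\le\max(\hat\gamma_A,\hat\gamma_{\hat C})$ and use $\omega<2/\max(\hat\gamma_A,\hat\gamma_{\hat C})$, whereas the paper combines $2\min(u,v)\le u+v$ with the monotonicity of $(x,y)\mapsto 2xy/(x+y)$; both close the argument equally well.
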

\begin{proof}
For the sake of simplicity, let us assume that $\theta_0$ is different from each grid point $ \theta_{i,n}$, $i=1,\dots,n$ (for the case of $\theta_0$ equal to a grid point see Remark \ref{rmk:right-hand}), then $A$ is HPD. Analogously, $C$ is nonnegative definite and $B$ is a full-rank matrix. Since $\alpha< \frac{2 \hat{a}_0(f_A)}{\|f_A\|_\infty}$, thanks to Lemma \ref{lem:alpha} and the nonnegativity of $C$, we have that $\hat{C}$ is HPD. Hence,  thanks to hypotheses 1., 2., and Lemma \ref{lem:cristina}, the pairs $(A, P_A)$ and $({\hat{C}},P_{\hat{C}})$ fulfil the approximation property (\ref{eq:approximation}). 
Therefore, we can apply Theorem \ref{thm:teo_notay}, which implies that the TGM applied to $\mathcal{\hat{A}}$ satisfies the inequality 
\eqref{eq:notay_max}, i.e.,
$$\rho({\rm TGM}(\mathcal{\hat{A}},\mathcal{P},\omega))\le \max\left(1-\frac{\omega}{{\kappa}({A},P_{A})}, 1-\frac{\omega}{{\kappa}({\hat{C}},P_{\hat{C}})}, \omega \hat{\gamma}_A-1, \omega \hat{\gamma}_{\hat{C}}-1, \sqrt{1-\frac{\omega(2-\omega\tilde{\gamma})}{\tilde{\kappa}}} \right).$$
In conclusion, in order to prove that $\rho({\rm TGM}(\mathcal{\hat{A}},\mathcal{P},\omega))<1$, we need to prove that each  of the quantities
in the maximum is bounded from above by a constant strictly smaller that 1.
 
 The quantities $1-\frac{\omega}{{\kappa}({A},P_{A})}$ and $1-\frac{\omega}{{\kappa}({\hat{C}},P_{\hat{C}})}$ are strictly smaller than 1 because the approximation property constants ${{\kappa}({A},P_{A})}$ and ${{\kappa}({\hat{C}},P_{\hat{C}})}$  are finite and positive.
 
Concerning the two terms $\omega \hat{\gamma}_A-1$ and  $\omega \hat{\gamma}_{\hat{C}}-1$ in the maximum, we estimate
 \begin{equation}\label{eq:gamma1}
 \hat{\gamma}_A= \left(\alpha\left(2-\alpha\lambda_{\max}(D_{A}^{-1}A)\right)\right)^{-1}
 \le \left(2\alpha -\frac{\alpha^2}{\hat{a}_0(f_A)}\|f_A\|_\infty\right)^{-1} ,
 \end{equation}   
  \begin{equation}\label{eq:gamma2}
\hat{\gamma}_{\hat{C}}= \lambda_{\max}\left(D_{\hat{C}}^{-1}(C+BA^{-1}B^T)\right)
\le \frac{\left\|f_C+\frac{|f_B|^2}{f_A}\right\|_\infty}{\hat{a}_0(f_{\hat{C}})}, 
\end{equation}
where in the latter inequality we are using $\limsup_{\theta \to \theta_0} \frac{|f_{B}|^2(\theta)}{f_A(\theta)}< \infty$ and hence the generating function $f_C+\frac{|f_B|^2}{f_A}$ belongs to $L^1([-\pi,\pi])$.
Hence, using the majorization of $\omega$ in the hypothesis 1., it holds
\begin{equation*}
\omega \hat{\gamma}_A-1< 2 \left(2\alpha -\frac{\alpha^2}{\hat{a}_0(f_A)}\|f_A\|_\infty\right)  \left(2\alpha -\frac{\alpha^2}{\hat{a}_0(f_A)}\|f_A\|_\infty\right)^{-1}-1=1,
\end{equation*}
\begin{equation*}
\omega \hat{\gamma}_{\hat{C}}-1<2 \hat{a}_0(f_{\hat{C}})\left\|f_C+\frac{|f_B|^2}{f_A}\right\|_\infty^{-1} \frac{1}{\hat{a}_0(f_{\hat{C}})}\left\|f_C+\frac{|f_B|^2}{f_A}\right\|_\infty -1=1. 
\end{equation*}

Finally, in order to prove that $\sqrt{1-\frac{\omega(2-\omega\tilde{\gamma})}{\tilde{\kappa}}} <1$, we prove that $2-\omega \tilde{\gamma}>0$.
From hypothesis 3., we have
\begin{equation}\label{eq:om1}
\omega
< \left(2\alpha -\frac{\alpha^2}{\hat{a}_0(f_A)}\|f_A\|_\infty\right)+\hat{a}_0(f_{\hat{C}})\left\|f_C+\frac{|f_B|^2}{f_A}\right\|_\infty^{-1}.
\end{equation}
Moreover, using the estimations \eqref{eq:gamma1} and \eqref{eq:gamma2}, we have
\begin{equation}\label{eq:gamma3}
\tilde{\gamma}=\frac{2\hat{\gamma}_{A}\hat{\gamma}_{\hat{C}}}{\hat{\gamma}_{A}+\hat{\gamma}_{\hat{C}}}\le 2 \left(
\frac{\left(2\alpha -\frac{\alpha^2}{\hat{a}_0(f_A)}\|f_A\|_\infty\right)^{-1} \frac{1}{\hat{a}_0(f_{\hat{C}})}\left\|f_C+\frac{|f_B|^2}{f_A}\right\|_\infty}{\left(2\alpha -\frac{\alpha^2}{\hat{a}_0(f_A)}\|f_A\|_\infty\right)^{-1}+\frac{1}{\hat{a}_0(f_{\hat{C}})} \left\|f_C+\frac{|f_B|^2}{f_A}\right\|_\infty}
\right),
\end{equation}
where we used the fact that the function $(x,y)\mapsto 2 \frac{x y}{x+y} $ is increasing in $(0,+\infty)\times (0,+\infty)$.
Then, combining \eqref{eq:om1} and \eqref{eq:gamma3}, we obtain $\omega\tilde{\gamma}< 2$.
\end{proof}

\begin{Remark}\label{rmk:right-hand}
Similar to Lemma \ref{lem:cristina}, the proof of the previous theorem requires that the symbols $f_A$ and $f_B$ do not vanish at a grid point. Otherwise, the result still holds applying one of the two techniques in Remark \ref{rem:singular}.
\end{Remark}


\subsection{Example: 1D elasticity problem}\label{sec:numerical_TGM}
In this subsection we want to show the numerical efficiency of the TGM convergence results in Theorem \ref{thm:Notay_symbol} when applied to linear systems stemming from the finite difference approximation of a one dimensional elasticity problem. 

Consider the coupled system of one-dimensional scalar equations
\begin{equation}\label{eq:1D_elasticity}
\begin{cases}
-u''-v' &= g_1(x),\\
u'-\rho v &=  g_2(x),
\end{cases}
\end{equation}
with $x \in \Omega=[0,1]$, $\rho>0$, and periodic boundary conditions. 
Discretizing the problem using standard finite difference methods with stepsize $h=1/(n+1)$ and scaling by the diagonal matrix $\mathcal{D}^{(1)}$, we obtain
\begin{equation} \label{eq:spectrally_analyzed_matrix}
\mathcal{A} = \mathcal{D}^{(1)}	\begin{bmatrix}
		A & hB^T \\
		hB & -h^2 C
	\end{bmatrix}\mathcal{D}^{(1)} = 	\begin{bmatrix}
		A& B^T \\
		B & -C
	\end{bmatrix},
	\qquad
	\mathcal{D}^{(1)}  = \begin{bmatrix}
		I & O  \\
		O & \frac{1}{h}I \\
	\end{bmatrix},	
\end{equation}
where $A,$ $C$, and $B$ are circulant matrices defined by the symbols
\begin{align}\label{eq:ABC_circ}
	f_A(\theta) = 2-2\cos(\theta), \quad f_B(\theta) = 1-{\rm e}^{\hat{\imath}\theta}, 
	\quad f_C(\theta) = \frac{2\rho}{3} (2+\cos(\theta)).
\end{align}

We prove that these symbols fulfil the hypothesis of Theorem  \ref{thm:Notay_symbol}. 
In particular, $f_C>0$ and $|f_B|^2= f_A$, which vanishes in $\theta_0=0$ and is positive in $(0, 2\pi)$. 

Since $\hat{a}_0(f_A)=2$ and $\|f_A\|_\infty=4$, we have
$\frac{2 \hat{a}_0(f_A)}{\|f_A\|_\infty}=1$
and hence we choose  $\alpha=\frac{1}{2}$, which is the middle point of the admissible interval $(0,1)$.

According to \eqref{eq:f_Chat}, the symbol of $\hat{C}$ is
\begin{align*}
f_{\hat{C}}(\theta)=\frac{2\rho}{3} (2+\cos(\theta)) + \frac{\alpha(2-2\cos(\theta))}{2} \left(2- \frac{\alpha}{2} (2-2\cos(\theta) \right),\\
\end{align*}
that for $\alpha=\frac{1}{2}$ is
\begin{equation}\label{eq:fCelast}
f_{\hat{C}}(\theta)
=\frac{32\rho+15}{24}+\frac{4\rho-3}{6} \cos\theta-\frac{1}{8}\cos 2\theta.
\end{equation}
Concerning the grid transfer operators, the symbol 
\[p_A(\theta)=\sqrt{2}(1+\cos(\theta)) \] 
fulfills the assumptions 1.\ and 2.\ of Theorem \ref{thm:Notay_symbol} because
\begin{align*}
|p_A|^2(\theta)+ |p_A|^2(\theta+\pi)&=
8 + 8 \cos^2(\theta)>0, \qquad \forall \theta \in [0,2\pi], \\
\limsup_{\theta\to 0} \frac{|p_A|^2(\theta+\pi)}{f_A(\theta)}&=\limsup_{\theta\to 0} \frac{|2-2\cos(\theta)|^2}{2-2\cos(\theta)}=0.
\end{align*}
On the other hand, $p_{\hat{C}}$ could be chosen as the trivial downsampling operator since $f_{\hat{C}} > 0$ for all $\rho$, but in practice 
\begin{equation}\label{eq:limit_1_over_fC}
\limsup_{\theta\to 0} \frac{1}{f_{\hat{C}}(\theta)}=\frac{1}{2\rho}.
\end{equation}
Nevertheless, when $\rho$ approaches zero the previous limit goes to infinity and hence, when $\rho$ is small, it could be useful to choose $p_{\hat{C}}$ such that $p_{\hat{C}}(\pi)=0$.
Therefore, we choose 
\[p_{\hat{C}}(\theta) = p_A(\theta)=\sqrt{2}(1+\cos(\theta)) \] 
such that
$$\limsup_{\theta\to 0} \frac{|p_{\hat{C}}|^2(\theta+\pi)}{f_{\hat{C}}(\theta)}=0, \qquad \forall \rho>0.$$

To fulfill the assumption 3.\ of Theorem \ref{thm:Notay_symbol} we require
 that 
\begin{align*}
\omega&<2 \min\left( 2\alpha -\frac{\alpha^2}{\hat{a}_0(f_A)}\|f_A\|_\infty\, ,  \, \hat{a}_0(f_{\hat{C}})\left\|f_C+\frac{|f_B|^2}{f_A}\right\|_\infty^{-1}\right).
\end{align*}
For $f_{\hat{C}}$ defined as in \eqref{eq:fCelast}, it holds
\[
\hat{a}_0(f_{\hat{C}})\left\|f_C+\frac{|f_B|^2}{f_A}\right\|_\infty^{-1}=\, \frac{32\rho+15}{48\rho+24}\, .
\]
and hence, since $\alpha=\frac{1}{2}$, we have
\begin{align*}
\omega&<2 \min \left(
\frac{1}{2}\,, \, \frac{32\rho+15}{48\rho+24}\right) = 1.
\end{align*}
Therefore, Theorem \ref{thm:Notay_symbol} ensures that the TGM applied to the system having $\mathcal{\hat{A}}$ as coefficient matrix converges,
but it remains to estimate the best $\omega\in
(0,1)
$.
This can be done minimizing the upper bound in (\ref{eq:notay_max}). Note that such  $\omega_{\rm opt}$  could be different from the value that minimizes $\rho({\rm TGM}(\mathcal{\hat{A}},\mathcal{P},\omega))$.
Nevertheless, the numerical results confirm that it gives the minimum number of iterations to convergence.  

We already provided upper bounds for the quantities $\hat{\gamma}_A,$ $\hat{\gamma}_{\hat{C}}$, and $\tilde{\gamma}$ in the proof of Theorem \ref{thm:Notay_symbol}.
Precisely,
 \begin{align*}
 \hat{\gamma}_A\le \left(2\alpha -\frac{\alpha^2}{\hat{a}_0(f_A)}\|f_A\|_\infty\right)^{-1}=
 2
 ,
 \end{align*}   
  \begin{align*}
\hat{\gamma}_{\hat{C}}\le
\frac{1}{\hat{a}_0(f_{\hat{C}})}\left\|f_C+\frac{|f_B|^2}{f_A}\right\|_\infty= 
\frac{48\rho+24}{32\rho+15}=\frac{48}{31}
, 
\end{align*}
 and 
 $\tilde{\gamma}\le 96/55.$
 
In the remaining part of the section we fix $\rho=1/2$ in order to show how to make the explicit computation of the optimal parameter $\omega$
. Firstly, we focus on  $\kappa(A,P_A)$,  $\kappa(\hat{C},P_{\hat{C}})$ and $\tilde{k}$. In particular, we use the fact that the pairs $\left(\mathcal{C}_n(f_A),\mathcal{C}_n(p_A)K_n^T\right)$ and $\left(\mathcal{C}_n(f_{\hat{C}}),\mathcal{C}_n(p_{\hat{C}})K_n^T\right)$ satisfy the approximation property (\ref{eq:approximation}) and, by Lemma \ref{lem:cristina},
\begin{align*}
{\kappa}(\mathcal{C}_n(f_A),\mathcal{C}_n(p_A)K_n^T)&\le 2 \hat a_0(f_A)\left\|\frac{|p_A|^2(\theta+\pi)}{f_A(\theta)}\right\|_\infty  \cdot\left\|\frac{1}{|p_A|^2(\theta)+|p_A|^2(\theta+\pi)}\right\|_\infty\\
&= 4 \cdot \left\| \frac{(2+2\cos(\theta+\pi))^2}{2(2-2\cos(\theta))} \right\|_{\infty}\cdot \left\| \frac{2}{(2+2\cos(\theta))^2+(2-2\cos(\theta))^2} \right\|_{\infty}\\
&= 4\cdot \left\| \frac{(2-2\cos(\theta))}{2} \right\|_{\infty} \cdot \left\| \frac{2}{12+4\cos(2\theta)} \right\|_{\infty}  = 4\cdot 2 \cdot \frac{1}{4}=2.
\end{align*}
Since 
$\hat a_0(f_{\hat{C}})=31/24$
, 
\begin{align*}
{\kappa}(\mathcal{C}_n(f_{\hat{C}}),\mathcal{C}_n(p_{\hat{C}})K_n^T)&\le 2 \hat a_0(f_{\hat{C}})\left\|\frac{|p_{\hat{C}}|^2(\theta+\pi)}{f_{\hat{C}}(\theta)}\right\|_\infty  \cdot\left\|\frac{1}{|p_{\hat{C}}|^2(\theta)+|p_{\hat{C}}|^2(\theta+\pi)}\right\|_\infty\\
& = 
\frac{31}{24}
\cdot \left\| \frac{
(2-2\cos\theta)^2
}{
\frac{31}{24}-\frac{1}{6}\cos(\theta)-\frac{1}{8}\cos(2\theta)
} \right\|_{\infty}\cdot 
\frac{1}{4}
=
\frac{31}{8}
\end{align*}
and 
$\tilde{k}\le 124/47$
.
 
Then we need to minimize the function 
\begin{equation}\label{eq:omega}
\mu:\omega \mapsto \max \left( 1- \frac{\omega}{2}, 1- 
\frac{31}{8}
\omega, 
2
\omega-1, 
\frac{48}{31}
\omega-1, 
\sqrt{\frac{1128}{1705}\omega^2 - \frac{47}{62}\omega+1}
\right)
\end{equation} 
in order to choose the relaxation parameter $\omega$ such that the TGM convergence is as fastest as possible. Figure \ref{fig:plot_omega} depicts the five functions in (\ref{eq:omega}) and the minimizer of $\mu$, denoted as $\omega_{\rm opt}$ and computed as the minimum of the parabola 
$\frac{1128}{1705}\omega^2 - \frac{47}{62}\omega+1$. 
This parameter $\omega_{\rm opt}=\frac{55}{96}$ gives the upper bound 
$$\rho({\rm TGM}(\mathcal{\hat{A}},\mathcal{P},\omega_{\rm opt}))\le
\sqrt{\frac{1128}{1705}\omega_{\rm opt}^2 - \frac{47}{62}\omega_{\rm opt}+1}\approx 0.8848$$
and hence the TGM has a linear convergence since the bound of the spectral radius does not depend on the matrix size.

\begin{figure}[htb]
\caption{Plot of the functions in (\ref{eq:omega}). }
\centering\includegraphics[width=\textwidth]{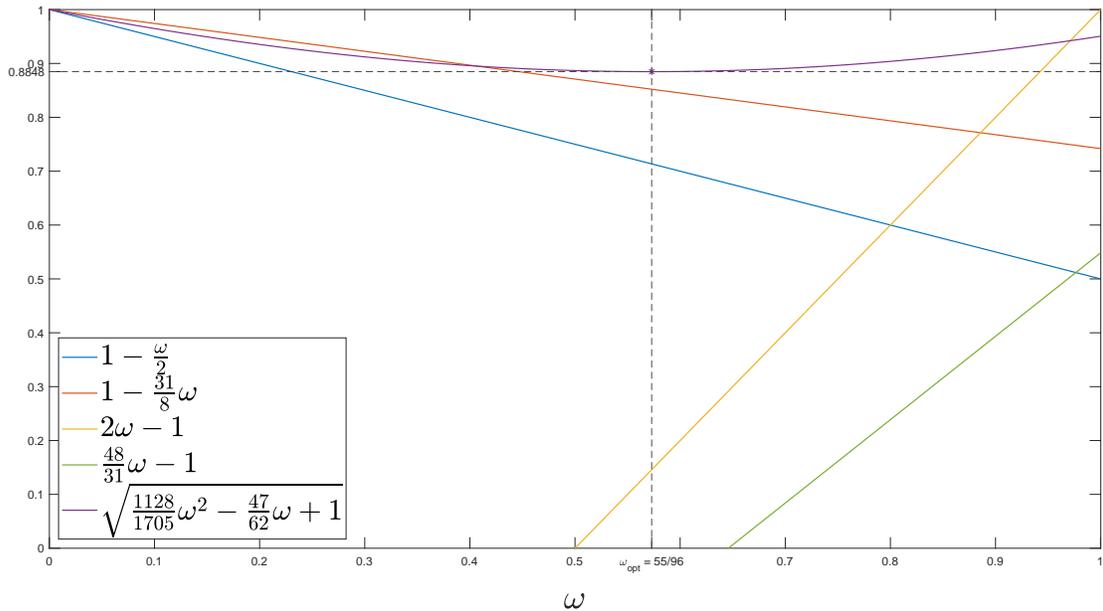}
\label{fig:plot_omega}
\end{figure}

%

\section{Multigrid analysis}\label{sec:procedure}

In this section, we extend the previous TGM convergence to the multigrid method. In particular, for each recursion level of the multigrid method, firstly we apply a diagonal scaling to preserve the same structure of the coefficient matrices, then we prove the TGM convergence (level independence) and the band structure of the involved matrices  keeping a linear cost of the matrix-vector product. It follows that the W-cycle has a constant convergence rate thanks to an automatic estimation of the smoothing parameters.

For the TGM defined in Theorem \ref{thm:teo_notay}, after the projection by $\mathcal{P}$ the coarser matrix has the same structure of $\mathcal{A}$  in (\ref{eq:saddle_notay}) except for the sign of the last block row. Therefore, changing this sign by a left diagonal scaling we can apply recursively the TGM.
In detail, fix the finer level matrix as
\begin{equation}\label{eq:Astep0}
\mathcal{A}_{2n_0}\{0\} = \begin{bmatrix}
	{A}\{0\} & B\{0\}^T \\
	B\{0\} & -C\{0\}
	\end{bmatrix}=\\
	\left[\begin{array}{ccc}
	{A} & B^T \\
	B & -C
	\end{array}\right],
\end{equation}
with $A,B,C \in \C^{n_0 \times n_0}$ for $n_0=2^\beta$, $\beta \in \N$, and $\hat{C}\{0\}=\hat{C}$.
For each level $\ell \geq 0$, let
\[
\mathcal{A}_{2n_{\ell}}\{\ell\} = \begin{bmatrix}
	{A}\{\ell\} & B\{\ell\}^T \\
	B\{\ell\} & -{C}\{\ell\}
	\end{bmatrix}
\]
and compute the transformation \eqref{eq:LAU}, i.e.,
\begin{equation}\label{eq:hatAl}
\mathcal{\hat{A}}_{2n_{\ell}}\{\ell\} = \mathcal{L}\{\ell\}\mathcal{A}_{2n_{\ell}}\{\ell\}\mathcal{U}\{\ell\},
\end{equation}
where
\begin{equation*}
\mathcal{L}\{\ell\}= \begin{bmatrix}
I_{n_{\ell}}\\
\alpha_{\ell} B\{\ell\} D_{A\{\ell\}}^{-1}&-I_{n_{\ell}}
\end{bmatrix}_{2n_{\ell}\times 2n_{\ell}}, \quad
\mathcal{U}\{\ell\}=\begin{bmatrix}
I_{n_{\ell}}&-\alpha_\ell D^{-1}_{{A\{\ell\}}}B\{\ell\}^T\\
&I_{n_{\ell}}
\end{bmatrix}_{2n_{\ell}\times 2n_{\ell}}.
\end{equation*}

 Then we define recursively the sequence of matrices 
\begin{align}\label{eq:recursive_AL}
\mathcal{A}_{2n_{\ell+1}}\{\ell+1\} 
	&= 
	\begin{bmatrix}
		I_{n}& \\
 		& -I_{n}
	\end{bmatrix}
	\begin{bmatrix}
		{P}_{{A}\{\ell\}}& \\
 		& -{P}_{\hat{C}\{\ell\}}
	\end{bmatrix}^T
	\mathcal{\hat{A}}_{2n_{\ell}}\{\ell\} 	
	\begin{bmatrix}
		{P}_{{A}\{\ell\}}& \\
 		& -{P}_{\hat{C}\{\ell\}}
	\end{bmatrix},
\end{align}
where
\begin{small}
\begin{equation}\label{eq:recursive_AL_elements}
\begin{split}
{A}\{\ell+1\}&={P}_{{A}\{\ell\}}^T{A}\{\ell\}{P}_{{A}\{\ell\}},\\
B\{\ell+1\}&={P}_{\hat{C}\{\ell\}}^TB\{\ell\}(I_{n_\ell}-\alpha_\ell D_{{A}\{\ell\}}^{-1}{A}\{\ell\}){P}_{{A}\{\ell\}}, \\
{C}\{\ell+1\}&={P}_{\hat{C}\{\ell\}}^T\hat{C}\{\ell\}{P}_{\hat{C}\{\ell\}},\\
\hat{C}\{\ell\}&={C}\{\ell\}+B\{\ell\} (2\alpha_{\ell} D_{{A}\{\ell\}}^{-1}-\alpha_{\ell}^2D_{{A}\{\ell\}}^{-1}{A}\{\ell\}D_{{A}\{\ell\}}^{-1})B\{\ell\}^T.
\end{split}
\end{equation}
\end{small}
In formula (\ref{eq:recursive_AL_elements}) the matrices
 ${P}_{{A}\{\ell\}}\in\mathbb{R}^{n_{\ell} \times n_{\ell+1}}$ and ${P}_{\hat{C}\{\ell\}}\in\mathbb{R}^{n_{\ell}\times n_{\ell+1}}$,
 with $n_{\ell+1}=n_{\ell}/2$, are the prolongation operators chosen for solving efficiently the scalar systems with coefficient matrix   ${A}\{\ell\}$ and $\hat{C}\{\ell\}$, respectively, where $\alpha_{\ell}$ is such that 
 \begin{equation}\label{eq:alpha_ell}
 	\alpha_\ell=\|D_{{A}\{\ell\}}^{-1}A\{\ell\}\|_2^{-1}.
 \end{equation}
 
For the sake of simplicity, we prove the level independence of the multigrid procedure described above involving the matrices $\hat{\mathcal{A}}_{2n_{\ell}}\{\ell\}$ defined by  formulas  (\ref{eq:Astep0})-(\ref{eq:recursive_AL_elements}) in the case of symbols vanishing in $\theta_0=0$, which arises from the discretization of boundary values problems. For the more general case $\theta_0 \neq 0$ we can apply a change of variable shifting the zero like in Remark 7 in \cite{AD}.

\begin{Lemma}\label{lem:Notay_symbol_wcycle}
Consider the matrices $\mathcal{A}_{2n_{\ell}}\{\ell\}\in \mathbb{C}^{2n_{\ell}\times 2n_{\ell}}$ defined by formulae (\ref{eq:Astep0})-(\ref{eq:recursive_AL_elements}). 
Define for all $\ell$ the grid transfer operators 
\begin{equation*}
\mathcal{P}\{\ell\}=\begin{bmatrix}
P_{A{\{\ell\}}}& \\
& P_{{\hat{C}\{\ell\}}}
\end{bmatrix}, \qquad 
P_{A{\{\ell\}}}=\mathcal{C}_{n_\ell}(p_{A})K_{n_\ell}^T,
\qquad P_{\hat{C}\{\ell\}}=\mathcal{C}_{n_\ell}(p_{\hat{C}})K_{n_\ell}^T.
\end{equation*}
Suppose that $f_{A\{\ell\}}$, $f_{C\{\ell\}}$, $f_{B\{\ell\}}$, $\alpha_\ell$, $p_{A}$ and $p_{\hat{C}}$ fulfil the hypotheses of Theorem \ref{thm:Notay_symbol} with $\theta_0=0$. Moreover, assume that
\begin{equation*}
|p_{A}|^2(\theta)+ |p_{\hat{C}}|^2(\theta+\pi)>0,  \qquad\forall\,\theta\in [0,2\pi].
\end{equation*}
Then $f_{A\{\ell+1\}}(0)=f_{B\{\ell+1\}}(0)=0$, $f_{A\{\ell+1\}}(\theta)>0$ and $f_{B\{\ell+1\}}(\theta)\neq0$ for all $\theta\in (0,2\pi)$, and
\begin{equation}\label{eq:newlim}
\limsup_{\theta\rightarrow 0}\frac{f_{A\{\ell\}}(\theta)}{f_{A\{\ell+1\}}(\theta)}=c_1, \qquad
\limsup_{\theta\rightarrow 0}\frac{\left|f_{B\{\ell+1\}}(\theta)\right|^2}{\left|f_{B\{\ell\}}(\theta)\right|^2}=c_2, \qquad 
0<c_1,c_2<\infty.
\end{equation}
Moreover, $f_{{C}\{\ell+1\}}(\theta)\geq 0$ for all $\theta\in [0,2\pi]$ and
\begin{equation}\label{eq:newC}
\limsup_{\theta\rightarrow 0}\frac{f_{\hat{C}\{\ell\}}(\theta)}{f_{\hat{C}\{\ell+1\}}(\theta)}<\infty.
\end{equation}
\end{Lemma}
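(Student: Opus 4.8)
The plan is to reduce every assertion to an explicit computation of generating symbols and then apply the coarsening formula of Lemma~\ref{lem:f_coarse} repeatedly. First I would show by induction on $\ell$ that all four blocks stay circulant: since sums, products and inverses of circulant matrices are circulant, the smoothed block $B\{\ell\}(I_{n_\ell}-\alpha_\ell D_{A\{\ell\}}^{-1}A\{\ell\})$ equals $\mathcal C_{n_\ell}(g_\ell)$ with
\[
g_\ell(\theta)=f_{B\{\ell\}}(\theta)\left(1-\tfrac{\alpha_\ell}{\hat a_0(f_{A\{\ell\}})}f_{A\{\ell\}}(\theta)\right),
\]
and $\hat C\{\ell\}=\mathcal C_{n_\ell}(f_{\hat C\{\ell\}})$ with $f_{\hat C\{\ell\}}$ the level-$\ell$ instance of \eqref{eq:f_Chat}. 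Writing each Galerkin product $P^{\,T}(\cdot)P$ with $P=\mathcal C(p)K^T$ (the projector symbols $p_A,p_{\hat C}$ being real and even, so that $P^T=P^H$), Lemma~\ref{lem:f_coarse} then gives closed forms for $f_{A\{\ell+1\}}$, $f_{B\{\ell+1\}}$ and $f_{C\{\ell+1\}}$ as the coarsenings of $f_{A\{\ell\}}$ (with $p_1=p_2=p_A$), of $g_\ell$ (with $p_1=p_{\hat C}$, $p_2=p_A$), and of $f_{\hat C\{\ell\}}$ (with $p_1=p_2=p_{\hat C}$), respectively.

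For the $A$-block I would apply only the zero-preservation part of Lemma~\ref{lem:f_coarse}: $f_{A\{\ell\}}$ vanishes solely at $\theta_0=0$ and $p_A$ satisfies conditions~1 and~2 of Lemma~\ref{lem:cristina}, so $f_{A\{\ell+1\}}$ has a zero at $2\cdot0=0$ of the \emph{same} order and is strictly positive on $(0,2\pi)$ (each summand in the coarsening is nonnegative and condition~1 forbids both weights vanishing together). Equal order at $0$ immediately yields $0<\limsup_{\theta\to0} f_{A\{\ell\}}/f_{A\{\ell+1\}}=c_1<\infty$, the first limit in \eqref{eq:newlim}.

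The $B$-block is the heart of the argument and the step I expect to be hardest. Since $g_\ell=f_{B\{\ell\}}\,s_\ell$ with the real Jacobi factor $s_\ell=1-\tfrac{\alpha_\ell}{\hat a_0}f_{A\{\ell\}}$ satisfying $s_\ell(0)=1\neq0$, the symbol $g_\ell$ vanishes at $0$ exactly to the order of $f_{B\{\ell\}}$, and coarsening sends this to a zero of $f_{B\{\ell+1\}}$ at $0$ of the same order; this delivers the second limit $0<c_2<\infty$ in \eqref{eq:newlim}, and chaining $|f_{B\{\ell+1\}}|^2/f_{A\{\ell+1\}}=(|f_{B\{\ell+1\}}|^2/|f_{B\{\ell\}}|^2)(|f_{B\{\ell\}}|^2/f_{A\{\ell\}})(f_{A\{\ell\}}/f_{A\{\ell+1\}})$ with the level-$\ell$ hypothesis preserves the crucial bound $\limsup_{\theta\to0}|f_{B\{\ell+1\}}|^2/f_{A\{\ell+1\}}<\infty$. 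The genuine obstacle is to prove $f_{B\{\ell+1\}}(\theta)\neq0$ for $\theta\in(0,2\pi)$: unlike $f_A$, the factor $s_\ell$ introduces extra real zeros of $g_\ell$ (at the arguments where $f_{A\{\ell\}}$ attains $\hat a_0/\alpha_\ell$, typically $\theta=\pi$), and, $g_\ell$ being complex, the two-term coarsening sum could in principle cancel. I would resolve this by (i) locating all zeros of $g_\ell$ on $[0,2\pi)$ — the single zero of $f_{B\{\ell\}}$ at $0$ together with the zero set of the real factor $s_\ell$ — and checking that for each $\phi\in(0,\pi)$ at least one of $g_\ell(\phi),g_\ell(\phi+\pi)$ survives with a nonvanishing weight, using the extra hypothesis $|p_A|^2(\theta)+|p_{\hat C}|^2(\theta+\pi)>0$ precisely to forbid the weighting factors $\overline{p_{\hat C}}p_A$ from vanishing simultaneously; and (ii) ruling out cancellation of the surviving terms by comparing the phases of $f_{B\{\ell\}}(\phi)$ and $f_{B\{\ell\}}(\phi+\pi)$ against the sign of $s_\ell$, which (as the model case $f_B=1-\E^{\hat\imath\theta}$ shows, where the real part of the coarse symbol stays strictly positive on the interior) are never anti-parallel.

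Finally, for the $C$- and $\hat C$-blocks: $f_{C\{\ell+1\}}$ is the coarsening of $f_{\hat C\{\ell\}}$, which is strictly positive because $\hat C\{\ell\}$ is HPD by Lemma~\ref{lem:alpha}; hence $f_{C\{\ell+1\}}\ge0$, and then $f_{\hat C\{\ell+1\}}>0$ by \eqref{eq:f_Chat} together with Lemma~\ref{lem:alpha} applied at level $\ell+1$, so $\hat C\{\ell+1\}$ is again HPD. For \eqref{eq:newC} I would split on the value $f_{C\{\ell\}}(0)$: if $f_{C\{\ell\}}(0)>0$ then $f_{\hat C\{\ell+1\}}(0)>0$ and the ratio is trivially finite; if $f_{C\{\ell\}}(0)=0$ the symbol $f_{\hat C\{\ell\}}$ vanishes at $0$ and condition~2 of Theorem~\ref{thm:Notay_symbol} on $p_{\hat C}$ lets me invoke the zero-order preservation of Lemma~\ref{lem:f_coarse} once more, giving equal orders at $0$ and hence a finite $\limsup$. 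Collecting these facts shows that the full list of hypotheses of Theorem~\ref{thm:Notay_symbol} is reproduced at level $\ell+1$, which is exactly the level-independence step.
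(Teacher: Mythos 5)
Your overall route is in fact the paper's route: the paper proves this lemma by deferring entirely to Lemmas~\ref{lem:Notay_symbol_wcycleb} and \ref{lem:Notay_symbol_wcycle_C} in \ref{sec:appendix}, which do precisely what you outline --- closure of the circulant algebra, closed forms for the coarse symbols from Lemma~\ref{lem:f_coarse}, and a per-block study of zeros and $\limsup$'s (your chaining of $|f_{B\{\ell+1\}}|^2/f_{A\{\ell+1\}}$ through the three ratios is literally the computation in Theorem~\ref{thm:Notay_symbol_wcycle}). The genuine gap sits exactly where you predicted it: step (ii) of your $B$-block argument. Writing $\phi=\theta/2$, the coarse symbol is $2f_{B\{\ell+1\}}(2\phi)=a(\phi)\,f_{B\{\ell\}}(\phi)+b(\phi)\,f_{B\{\ell\}}(\phi+\pi)$ with $a=\bigl(\overline{p_{\hat C}}\,p_A\,s_\ell\bigr)(\phi)$, $b=\bigl(\overline{p_{\hat C}}\,p_A\,s_\ell\bigr)(\phi+\pi)$ and $s_\ell=1-\alpha_\ell f_{A\{\ell\}}/\hat a_0(f_{A\{\ell\}})$. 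When $p_A,p_{\hat C}$ are real, $a$ and $b$ are real numbers of uncontrolled sign (the Jacobi factor $s_\ell$ changes sign wherever $f_{A\{\ell\}}$ crosses $\hat a_0(f_{A\{\ell\}})/\alpha_\ell$), and nothing in the hypotheses of Theorem~\ref{thm:Notay_symbol} constrains the relative phase of $f_{B\{\ell\}}(\phi)$ and $f_{B\{\ell\}}(\phi+\pi)$. Your claim that these are ``never anti-parallel'' is verified only for the model symbol $f_B=1-{\rm e}^{\hat{\imath}\theta}$; it is not a consequence of the stated assumptions, so as written this is an unproven assertion rather than a proof step. Relatedly, your appeal to the order-preservation part of Lemma~\ref{lem:f_coarse} for the $B$-block is not covered by that lemma as stated (it concerns a single nonnegative symbol with $p_1=p_2$, whereas here the symbol is complex and $p_1=p_{\hat C}\neq p_2=p_A$), so the same no-cancellation issue reappears in the limit defining $c_2$.

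In fairness, this is precisely the point where the paper's own argument is thinnest: Lemma~\ref{lem:Notay_symbol_wcycleb} expands $|f_{B\{\ell+1\}}|^2$ into the three terms of (\ref{eq:expression_of_fb}) and asserts that ``at least one term is different from 0,'' which settles the case where one of the two complex amplitudes vanishes (then the cross term vanishes as well), but does not engage with the possibility that two nonvanishing amplitudes cancel; your proposal is more candid about the difficulty but does not resolve it either. One further, fixable, slip concerns the $\hat C$-block, where you genuinely deviate from the paper: the paper proves (\ref{eq:newC}) by splitting the numerator as $f_{C\{\ell\}}+g_1|f_{B\{\ell\}}|^2$ and bounding the two resulting ratios separately, the second via (\ref{eq:limsup_fB}). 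Your case split on $f_{C\{\ell\}}(0)$ is viable, but Lemma~\ref{lem:f_coarse} applied to $f_{\hat C\{\ell\}}$ with projector $p_{\hat C}$ controls its Galerkin coarsening, which is $f_{C\{\ell+1\}}$, \emph{not} $f_{\hat C\{\ell+1\}}$; you must add that $f_{\hat C\{\ell+1\}}=f_{C\{\ell+1\}}+g_2|f_{B\{\ell+1\}}|^2\geq f_{C\{\ell+1\}}$ (the correction term is nonnegative by the choice of $\alpha_{\ell+1}$) to transfer the bound. With that line inserted, your $\hat C$ argument is correct and arguably shorter than the paper's.
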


\begin{proof}
The {assertion} follows directly from Lemmas \ref{lem:Notay_symbol_wcycleb} and \ref{lem:Notay_symbol_wcycle_C} in \ref{sec:appendix}. 
\end{proof}

\begin{Theorem}\label{thm:Notay_symbol_wcycle}
Consider the matrices $\mathcal{A}_{2n_{\ell}}\{\ell\}\in \mathbb{C}^{2n_{\ell}\times 2n_{\ell}}$ defined by formulae (\ref{eq:Astep0})-(\ref{eq:recursive_AL_elements}). 
Define for all $\ell$ the grid transfer operators 
\begin{equation}\label{eq:Plteo}
\mathcal{P}\{\ell\}=\begin{bmatrix}
P_{A{\{\ell\}}}& \\
& P_{{\hat{C}\{\ell\}}}
\end{bmatrix}, \qquad 
P_{A{\{\ell\}}}=\mathcal{C}_{n_\ell}(p_{A})K_{n_\ell}^T,
\qquad P_{\hat{C}\{\ell\}}=\mathcal{C}_{n_\ell}(p_{\hat{C}})K_{n_\ell}^T.
\end{equation}
Suppose that $f_{A\{0\}}$, $f_{C\{0\}}$, $f_{B\{0\}}$, $\alpha_0$, $p_{A}$ and $p_{\hat{C}}$ fulfil the hypotheses of Theorem \ref{thm:Notay_symbol} with $\theta_0=0$. Moreover, assume that
\begin{equation*}
|p_{A}|^2(\theta)+ |p_{\hat{C}}|^2(\theta+\pi)>0,  \qquad\forall\theta\in [0,2\pi].
\end{equation*}
For each $\ell\geq0$, consider a TGM associated with one iteration of damped Jacobi as postsmoothing with relaxation parameter 
\begin{equation}\label{eq:choice_omega_wcycle}
\omega_\ell<2\min\left(2 \alpha_\ell-\frac{\alpha_\ell^2}{\hat{a}_0(f_{A\{\ell\}})}\|f_{A\{\ell\}}\|_\infty\,,\, \hat{a}_0(f_{\hat{C}\{\ell\}})\left\|f_{{C}\{\ell\}}+\frac{|f_{B\{\ell\}}|^2}{f_{{A}\{\ell\}}}\right\|_\infty^{-1} \right).
\end{equation}
Then, the TGM iteration matrix involving the matrices $\hat{\mathcal{A}}_{2n_{\ell}}\{\ell\}$ defined in (\ref{eq:hatAl}) is such that
\begin{equation*}
\rho\left(TGM\left(\hat{\mathcal{A}}_{2n_{\ell}}\{\ell\}, {\mathcal{P}}\{\ell\}, \omega_\ell\right)\right)<1.
\end{equation*}
\end{Theorem}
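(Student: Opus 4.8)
The plan is to reduce the claim to a level-by-level application of Theorem \ref{thm:Notay_symbol} and to run an induction on $\ell$. The induction hypothesis I would carry is that the data $f_{A\{\ell\}},f_{C\{\ell\}},f_{B\{\ell\}},\alpha_\ell,p_A,p_{\hat C}$ satisfy all the hypotheses of Theorem \ref{thm:Notay_symbol} with $\theta_0=0$, together with the two auxiliary bounds $\limsup_{\theta\to0}|p_A|^2(\theta+\pi)/f_{A\{\ell\}}(\theta)<\infty$ and $\limsup_{\theta\to0}|p_{\hat C}|^2(\theta+\pi)/f_{\hat C\{\ell\}}(\theta)<\infty$. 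Once this invariant holds at level $\ell$, the choice \eqref{eq:choice_omega_wcycle} of $\omega_\ell$ is exactly hypothesis 3 of Theorem \ref{thm:Notay_symbol} for the level-$\ell$ symbols, so that theorem yields $\rho({\rm TGM}(\hat{\mathcal{A}}_{2n_\ell}\{\ell\},\mathcal{P}\{\ell\},\omega_\ell))<1$, which is the assertion.

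The base case $\ell=0$ is precisely the hypothesis of the theorem. For the inductive step I would feed the level-$\ell$ data into Lemma \ref{lem:Notay_symbol_wcycle}, whose conclusions supply most of the invariant at level $\ell+1$ directly: $f_{A\{\ell+1\}}$ and $|f_{B\{\ell+1\}}|^2$ vanish only at $0$ and are positive on $(0,2\pi)$, while $f_{C\{\ell+1\}}\ge0$ on all of $[0,2\pi]$. Condition 1 of Theorem \ref{thm:Notay_symbol} involves only the fixed polynomials $p_A,p_{\hat C}$ and therefore persists unchanged. The admissibility of $\alpha_{\ell+1}$ follows from its definition \eqref{eq:alpha_ell}, which sets it equal to $\big(\lambda_{\max}(D_{A\{\ell+1\}}^{-1}A\{\ell+1\})\big)^{-1}$, the value that keeps $\hat C\{\ell+1\}$ positive definite at the grid points exactly as in Lemma \ref{lem:alpha}. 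Since $f_{A\{\ell+1\}}$ now vanishes at the grid point $\theta=0$, making $A\{\ell+1\}$ singular, I would handle this as in Theorem \ref{thm:Notay_symbol} by one of the devices of Remarks \ref{rem:singular} and \ref{rmk:right-hand}.

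The substantive step is the propagation of condition 2, i.e.\ that the \emph{fixed} projectors still match the coarser zeros. Here I would chain the inductive bounds against the finite positive limits \eqref{eq:newlim}--\eqref{eq:newC} produced by Lemma \ref{lem:Notay_symbol_wcycle}. For $p_A$ I write
\[
\frac{|p_A|^2(\theta+\pi)}{f_{A\{\ell+1\}}(\theta)}=\frac{|p_A|^2(\theta+\pi)}{f_{A\{\ell\}}(\theta)}\cdot\frac{f_{A\{\ell\}}(\theta)}{f_{A\{\ell+1\}}(\theta)},
\]
whose first factor has finite limsup by induction and whose second tends to $c_1<\infty$; the analogous factorization through $f_{\hat C\{\ell\}}/f_{\hat C\{\ell+1\}}$, finite by \eqref{eq:newC}, handles $p_{\hat C}$. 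The remaining cross bound of Theorem \ref{thm:Notay_symbol} is propagated by
\[
\frac{|f_{B\{\ell+1\}}|^2}{f_{A\{\ell+1\}}}=\frac{|f_{B\{\ell+1\}}|^2}{|f_{B\{\ell\}}|^2}\cdot\frac{|f_{B\{\ell\}}|^2}{f_{A\{\ell\}}}\cdot\frac{f_{A\{\ell\}}}{f_{A\{\ell+1\}}},
\]
where the outer factors tend to $c_2$ and $c_1$ and the middle one is finite by induction. This closes the induction and the theorem follows.

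I expect the crux to be exactly this preservation of condition 2. It is what makes the order-preservation statements of Lemma \ref{lem:f_coarse} and Lemma \ref{lem:Notay_symbol_wcycle} indispensable: only because each coarsening keeps the zero at $\theta=0$ with unchanged order do the ratios in \eqref{eq:newlim}--\eqref{eq:newC} stay bounded and bounded away from $0$, so that a single fixed pair $(p_A,p_{\hat C})$ continues to satisfy condition 2 at every depth. It is also where the extra hypothesis $|p_A|^2(\theta)+|p_{\hat C}|^2(\theta+\pi)>0$ enters, since the off-diagonal block $B\{\ell+1\}$ is coarsened by $p_{\hat C}$ on the left and by $p_A$ on the right, and this mixed nondegeneracy is what guarantees that $f_{B\{\ell+1\}}$ retains a zero of the correct order only at $0$.
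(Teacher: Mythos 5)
Your proposal is correct and follows essentially the same route as the paper's proof: an induction on the level $\ell$ whose invariant is the set of hypotheses of Theorem \ref{thm:Notay_symbol}, with Lemma \ref{lem:Notay_symbol_wcycle} supplying the sign/zero structure of the coarse symbols and the finite ratios \eqref{eq:newlim}--\eqref{eq:newC}, and with exactly the same telescoping factorizations used to propagate condition 2 and the bound on $|f_{B\{\ell+1\}}|^2/f_{A\{\ell+1\}}$. Your additional remarks on the admissibility of $\alpha_{\ell+1}$ and on the singularity of $A\{\ell+1\}$ at the grid point $\theta=0$ address points the paper leaves implicit (via Remark \ref{rmk:right-hand}), but they do not change the argument.
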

\begin{proof}
The algebra structure of circulant matrices and Lemma \ref{lem:f_coarse} imply that the matrices $A\{\ell\}$, $C\{\ell\}$ and $B\{\ell\}$ defined by formulae (\ref{eq:Astep0})-(\ref{eq:recursive_AL_elements}) are circulant matrices themselves. We prove by induction that $f_{A\{\ell\}}$, $f_{C\{\ell\}}$, $f_{B\{\ell\}}$, $\alpha_\ell$, $p_{A}$ and $p_{\hat{C}}$ fulfil the hypotheses of Theorem \ref{thm:Notay_symbol} for all $\ell$ with $\theta_0=0$. 

For $\ell=0$, the hypotheses of Theorem \ref{thm:Notay_symbol} are fulfilled by assumption.

Then, we suppose that the hypotheses of Theorem \ref{thm:Notay_symbol} are fulfilled for $\ell$ and we prove them for $\ell+1$. 
By Lemma \ref{lem:Notay_symbol_wcycle} we have 
\begin{itemize}
\item $f_{A\{\ell+1\}}(0)=0$, $f_{A\{\ell+1\}}(\theta)>0$ for all $\theta\in (0,2\pi)$;
\item $f_{{C}\{\ell+1\}}(\theta) \geq 0$ for all $\theta\in [0,2\pi]$;
\item $f_{B\{\ell+1\}}(0)=0$, $f_{B\{\ell+1\}}(\theta)\neq0$ for all $\theta\in (0,2\pi)$;
\item 
\begin{equation*}
\limsup_{\theta\rightarrow 0}\frac{\left|f_{B\{\ell+1\}}(\theta)\right|^2}{f_{A\{\ell+1\}}(\theta)}=
\limsup_{\theta\rightarrow 0}\frac{\left|f_{B\{\ell+1\}}(\theta)\right|^2}{\left|f_{B\{\ell\}}(\theta)\right|^2}
							 \frac{\left|f_{B\{\ell\}}(\theta)\right|^2}{f_{A\{\ell\}}(\theta)}
							 \frac{f_{A\{\ell\}}(\theta)}{f_{A\{\ell+1\}}(\theta)}<\infty,
\end{equation*}
combining the induction hypothesis with the two limits in \eqref{eq:newlim}.
\end{itemize}

Concerning assumption 2.\ of Theorem \ref{thm:Notay_symbol}, for $A\{\ell+1\}$ we use the induction hypothesis 
$\limsup_{\theta\rightarrow 0}\frac{\left|p_{A}(\theta+\pi)\right|^2}{f_{A\{\ell\}}(\theta)}<\infty
$ and  \eqref{eq:newlim} so that we can write
\begin{equation}\label{eq:limsup_pA_on_fA}
\limsup_{\theta\rightarrow 0}\frac{\left|p_{A}(\theta+\pi)\right|^2}{f_{A\{\ell+1\}}(\theta)}=
\limsup_{\theta\rightarrow 0}\frac{\left|p_{A}(\theta+\pi)\right|^2}{f_{A\{\ell\}}(\theta)}\frac{f_{A\{\ell\}}(\theta)}{f_{A\{\ell+1\}}(\theta)}<\infty.
\end{equation}

Concerning $\hat{C}\{\ell+1\}$, we write
\[\limsup_{\theta\to {0}} \frac{|p_{\hat{C}}|^2(\theta+\pi)}{f_{\hat{C}\{\ell+1\}}(\theta)}
=\limsup_{\theta\to {0}} \frac{|p_{\hat{C}}|^2(\theta+\pi)}{f_{\hat{C}\{\ell\}}(\theta)}
\frac{f_{\hat{C}\{\ell\}}(\theta)}{f_{\hat{C}\{\ell+1\}}(\theta)}
<\infty,\]
where, for bounding the two terms, we used the induction hypothesis and formula (\ref{eq:newC}), respectively.

In conclusion, we proved that $f_{A\{\ell\}}$, $f_{C\{\ell\}}$, $f_{B\{\ell\}}$, $\alpha_\ell$, $p_{A}$ and $p_{\hat{C}}$ fulfil the hypotheses of Theorem \ref{thm:Notay_symbol} for all $\ell$ with $\theta_0=0$. Hence, Theorem \ref{thm:Notay_symbol} guarantees that $\rho\left(TGM\left(\hat{\mathcal{A}}\{\ell\}, {\mathcal{P}}\{\ell\}, \omega_\ell\right)\right)<1$, where $\omega_\ell$ is chosen according to (\ref{eq:choice_omega_wcycle}), and the proof is complete.
\end{proof}

When the matrices $A, B$, and $C$ are banded, then the matrix-vector product with matrix $\mathcal{A}$ in \eqref{eq:matrix_saddle} has a computational cost linear in $n$. Therefore, we would like to preserve the band structure of each block at the coarser levels such that each iteration of the V-cycle has a computational cost proportional to $n$.
This property is a consequence of the following lemmas that state that, at every level of the multigrid procedure, the generating functions of each block are trigonometric polynomials of degree lower than a constant independent of $n$. 

\begin{Lemma}\cite{AD}\label{lem:coarse_degree_psi}
Let $g$ be a trigonometric polynomial of degree $z_g$. Define 
\begin{equation}\label{eq:psi_g}
\psi(g)(\theta)=\frac{1}{2}\left[ g\left(\frac{\theta}{2}\right)+g\left(\frac{\theta}{2}+\pi\right)\right].
\end{equation}
Then, $\psi(g)$ is a trigonometric polynomial of degree at most $\left\lfloor{\frac{z_g}{2}}\right\rfloor$.
\end{Lemma}

This lemma implies that, for the classical multigrid method, the bandwidth of the coefficient matrix at the coarser levels becomes equal to the double of the bandwidth of the grid transfer operator, even when the coefficient matrix at the finer level has a large bandwidth.

To clearly distinguish the bandwidth of the grid transfer operators with respect to the coefficient matrices, we denote by
$q_A = z_{P_A}$ 
and
$q_{\hat{C}}=z_{P_{\hat{C}}}$ 
the degrees of the trigonometric polynomials $P_A$ and $P_{\hat{C}}$, respectively.

\begin{Lemma}\cite[Proposition 2]{AD} \label{lem:coarse_degree_A}
Let ${A\{\ell\}}$ be defined in \eqref{eq:recursive_AL_elements}, with $P_{A{\{\ell\}}}=\mathcal{C}_{n_\ell}(p_{A})K_{n_\ell}^T$, where $f_{A\{0\}}$ and $p_A$ are trigonometric polynomials of degree $z_{A\{0\}}$ and $q_A$, respectively. 
Then $f_{A\{\ell\}}$ is a trigonometric polynomial of degree $z_{A\{\ell\}}$ such that:
\begin{enumerate}
	\item $z_{A\{\ell\}} \le \max(z_{A\{0\}}, 2q_A)$ for all $\ell$;
	\item $z_{A\{\ell\}} \le 2q_A$ for $\ell$ large enough.
\end{enumerate}
\end{Lemma}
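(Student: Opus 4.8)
The plan is to reduce the matrix recursion \eqref{eq:recursive_AL_elements} to a scalar recursion on the symbol and then iterate the degree estimate of Lemma~\ref{lem:coarse_degree_psi}. First I would apply Lemma~\ref{lem:f_coarse} to the Galerkin product $A\{\ell+1\}=P_{A\{\ell\}}^{T}A\{\ell\}P_{A\{\ell\}}$ with $P_{A\{\ell\}}=\mathcal{C}_{n_\ell}(p_A)K_{n_\ell}^{T}$, i.e.\ taking $p_1=p_2=p_A$ and $f=f_{A\{\ell\}}$. Since $\overline{p}_A p_A=|p_A|^2$, this identifies $A\{\ell+1\}$ with a circulant matrix $\mathcal{C}_{n_{\ell+1}}(f_{A\{\ell+1\}})$ whose symbol is
\[
f_{A\{\ell+1\}}(\theta)=\frac{1}{2}\left(|p_A|^2 f_{A\{\ell\}}\left(\tfrac{\theta}{2}\right)+|p_A|^2 f_{A\{\ell\}}\left(\tfrac{\theta}{2}+\pi\right)\right)=\psi\!\left(|p_A|^2 f_{A\{\ell\}}\right)(\theta),
\]
where $\psi$ is the operator in \eqref{eq:psi_g}. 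By induction this already shows that $f_{A\{\ell\}}$ is a trigonometric polynomial for every $\ell$, which is the first part of the assertion.

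Next I would track the degree. The product $|p_A|^2=p_A\overline{p}_A$ is a trigonometric polynomial of degree at most $2q_A$, since the highest frequency appearing in $p_A\overline{p}_A$ is $q_A-(-q_A)=2q_A$; hence $g_\ell:=|p_A|^2 f_{A\{\ell\}}$ has degree $z_{g_\ell}\le 2q_A+z_{A\{\ell\}}$. Applying Lemma~\ref{lem:coarse_degree_psi} to $g_\ell$ and using that $2q_A$ is even yields the single scalar recursion
\[
z_{A\{\ell+1\}}\le\left\lfloor\frac{z_{g_\ell}}{2}\right\rfloor\le\left\lfloor\frac{2q_A+z_{A\{\ell\}}}{2}\right\rfloor=q_A+\left\lfloor\frac{z_{A\{\ell\}}}{2}\right\rfloor,
\]
which contains both remaining claims.

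For item~1 I would prove by induction that $M:=\max(z_{A\{0\}},2q_A)$ is preserved: assuming $z_{A\{\ell\}}\le M$, the recursion gives $z_{A\{\ell+1\}}\le q_A+M/2\le M$, because $M\ge 2q_A$ forces $q_A\le M/2$. For item~2 I would observe that $2q_A$ is a fixed point of the map $z\mapsto q_A+\lfloor z/2\rfloor$ (indeed $q_A+\lfloor 2q_A/2\rfloor=2q_A$), and that whenever $z_{A\{\ell\}}>2q_A$ one has $q_A+\lfloor z_{A\{\ell\}}/2\rfloor<z_{A\{\ell\}}$. Since the $z_{A\{\ell\}}$ are nonnegative integers, the sequence strictly decreases as long as it exceeds $2q_A$, so it is absorbed into $\{z\le 2q_A\}$ after finitely many levels and stays there by the fixed-point property, giving $z_{A\{\ell\}}\le 2q_A$ for $\ell$ large enough.

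The argument is elementary once the symbol recursion is identified with $\psi$; the only point requiring care is the bookkeeping of the floor function. In particular I would check that using the mere upper bound $z_{g_\ell}\le 2q_A+z_{A\{\ell\}}$ (rather than an exact degree, which could drop if leading coefficients cancel) still produces the clean recursion $q_A+\lfloor z_{A\{\ell\}}/2\rfloor$, and that the decrease above the fixed point is genuinely strict, so that the absorption in item~2 indeed occurs in finitely many steps.
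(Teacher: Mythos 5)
Your proof is correct. Note that the paper never proves this lemma itself---it is quoted from \cite[Proposition 2]{AD}---so there is no in-paper argument to compare line by line; however, your route is precisely the one the paper deploys for the companion result, Lemma \ref{lem:coarse_degree_B}: identify the coarse symbol as $\psi\left(|p_A|^2 f_{A\{\ell\}}\right)$ via Lemma \ref{lem:f_coarse} applied to the Galerkin product in \eqref{eq:recursive_AL_elements}, invoke the degree-halving bound of Lemma \ref{lem:coarse_degree_psi} to obtain the scalar recursion $z_{A\{\ell+1\}}\le q_A+\left\lfloor z_{A\{\ell\}}/2\right\rfloor$, and settle item 1 by induction and item 2 by the fixed-point/absorption argument. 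Your attention to the two delicate points---that degree \emph{upper bounds} (rather than exact degrees, which may drop through cancellation) still propagate through Lemma \ref{lem:coarse_degree_psi} because the floor of half is monotone, and that the decrease above the fixed point $2q_A$ is strict on integers so absorption occurs in finitely many levels---is exactly the bookkeeping required, so the argument is complete.
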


A similar result holds for the matrix \eqref{eq:matrix_saddle} when $A$, $B$, and $C$ are band matrices. 

\begin{Lemma}\label{lem:coarse_degree_B}
Consider the matrices $\mathcal{A}\{\ell\}\in \mathbb{C}^{2n_{\ell}\times 2n_{\ell}}$ defined by formulas (\ref{eq:Astep0})-\eqref{eq:recursive_AL_elements}, with $P_{A{\{\ell\}}}$ and $P_{\hat{C}{\{\ell\}}}$
defined as in \eqref{eq:Plteo}.
Let $$q=\max\{q_A, q_{\hat{C}}\},$$ where $q_A$ and $q_{\hat{C}}$ are the polynomial degrees associated with $p_{A}$ and $p_{\hat{C}}$, respectively. 

Then, for $\ell$ large enough, it holds 
\begin{enumerate}
\item $z_{B\{\ell\}}\le \max (2z_{B\{0\}}, 4q ),$
\item $z_{C\{\ell\}}\le \max(4 z_{B\{0\}},6q,2z_{C\{0\}}).$
\end{enumerate}
\end{Lemma}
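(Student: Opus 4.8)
The plan is to reduce everything to degree counting on the generating symbols, exploiting that each block stays circulant (by the circulant algebra together with Lemma~\ref{lem:f_coarse}) and that $D_{A\{\ell\}}=\hat a_0(f_{A\{\ell\}})I_{n_\ell}$ is a scalar multiple of the identity. First I would write out the symbol of every matrix appearing in (\ref{eq:recursive_AL_elements}). Since the factor $I_{n_\ell}-\alpha_\ell D_{A\{\ell\}}^{-1}A\{\ell\}$ is circulant with symbol $1-\tfrac{\alpha_\ell}{\hat a_0(f_{A\{\ell\}})}f_{A\{\ell\}}$, Lemma~\ref{lem:f_coarse} (i.e.\ the operator $\psi$ of (\ref{eq:psi_g}) applied to $\overline{p_1}\,g\,p_2$) gives the recursions
\begin{align*}
f_{B\{\ell+1\}}&=\psi\!\left(\overline{p_{\hat C}}\,p_A\Big(1-\tfrac{\alpha_\ell}{\hat a_0(f_{A\{\ell\}})}f_{A\{\ell\}}\Big)f_{B\{\ell\}}\right),\\
f_{C\{\ell+1\}}&=\psi\!\left(|p_{\hat C}|^2 f_{\hat C\{\ell\}}\right),
\end{align*}
with $f_{\hat C\{\ell\}}$ given by the analogue of (\ref{eq:f_Chat}).

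Then I would convert these into degree inequalities: degrees add under products and take the max under sums, and by Lemma~\ref{lem:coarse_degree_psi} the operator $\psi$ halves the degree (rounding down). This yields
\[
z_{B\{\ell+1\}}\le\Big\lfloor\tfrac{1}{2}\big(q_A+q_{\hat C}+z_{A\{\ell\}}+z_{B\{\ell\}}\big)\Big\rfloor,
\qquad
z_{\hat C\{\ell\}}\le\max\big(z_{C\{\ell\}},\,2z_{B\{\ell\}}+z_{A\{\ell\}}\big),
\]
and $z_{C\{\ell+1\}}\le q_{\hat C}+\big\lfloor z_{\hat C\{\ell\}}/2\big\rfloor$. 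Invoking Lemma~\ref{lem:coarse_degree_A}(2), for $\ell$ large I may replace $z_{A\{\ell\}}$ by its stable bound $2q_A\le 2q$, which decouples the $A$-bandwidth from the analysis.

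Next I would solve the resulting contractive recursions. Substituting $z_{A\{\ell\}}\le 2q$ turns the $B$-recursion into $z_{B\{\ell+1\}}\le 2q+\lfloor z_{B\{\ell\}}/2\rfloor$, whose fixed point is $4q$; checking that $M_B:=\max(2z_{B\{0\}},4q)$ is an invariant upper bound (because $M_B\ge 4q$ forces $2q+\lfloor M_B/2\rfloor\le M_B$) gives claim~1. Feeding $z_{B\{\ell\}}\le M_B$ into $z_{\hat C\{\ell\}}\le\max(z_{C\{\ell\}},\,2M_B+2q)$ and then into the $C$-recursion produces a second contraction $z_{C\{\ell+1\}}\le q+\lfloor\max(z_{C\{\ell\}},\,2M_B+2q)/2\rfloor$, whose invariant upper bound is the stated $\max(4z_{B\{0\}},6q,2z_{C\{0\}})$: the $6q$ is the asymptotic fixed point coming from $2z_{B\{\ell\}}+z_{A\{\ell\}}\le 10q$, while the $z_{B\{0\}}$- and $z_{C\{0\}}$-terms are exactly what the contraction must dominate during the transient.

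The main obstacle is the coupling: $z_{C}$ is driven by $z_{\hat C}$, which in turn depends on both $z_{C}$ and $z_{B}$ through $2z_{B}+z_{A}$. So the bounds cannot be obtained from a single scalar recursion; I must first pin the asymptotic bound on $z_{B}$, propagate it through $\hat C$, and only then close the recursion for $z_{C}$, all while keeping the floor functions and the nested maxima under control so that the two claimed invariants are genuinely preserved from one level to the next.
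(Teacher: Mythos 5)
Your proposal is correct and follows essentially the same route as the paper: the same symbol recursions via the operator $\psi$ of Lemma~\ref{lem:coarse_degree_psi}, the same degree bounds $z_{B\{\ell+1\}}\le q_A+q_{\hat C}+\bigl\lfloor (z_{A\{\ell\}}+z_{B\{\ell\}})/2\bigr\rfloor$ and $z_{C\{\ell+1\}}\le q_{\hat C}+\bigl\lfloor \max(z_{C\{\ell\}},2z_{B\{\ell\}}+z_{A\{\ell\}})/2\bigr\rfloor$, the same use of Lemma~\ref{lem:coarse_degree_A}(2) to freeze $z_{A\{\ell\}}\le 2q$ for $\ell$ large, and the same two-stage induction (first the invariant bound $\max(2z_{B\{0\}},4q)$ for $B$, then feeding it into the $C$ recursion to verify the invariant $\max(4z_{B\{0\}},6q,2z_{C\{0\}})$). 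Your phrasing in terms of contractive recursions and invariant upper bounds is just a repackaging of the paper's induction argument, with identical estimates.
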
 
\begin{proof}
To prove item 1.\ we consider the function $\psi$ defined in equation \eqref{eq:psi_g}.
Exploiting the structure of $B\{\ell+1\}$ in \eqref{eq:recursive_AL_elements}, we have that the associated generating function is 
\[f_{B\{\ell+1\}}=\psi(g_{B\{\ell\}}), \qquad g_{B\{\ell\}}(\theta)=\overline{p}_{\hat{C}}p_{A}f_{B\{\ell\}}(\theta)\left(1-\frac{\alpha_\ell}{\hat{a}_0(f_{A\{\ell\}})}f_{A\{\ell\}}(\theta)\right).\]
Therefore, by Lemma \ref{lem:coarse_degree_psi}, we have
\[
z_{B\{\ell+1\}}\le \left \lfloor{ \frac{2q+z_{B\{\ell\}}+ z_{A\{\ell\}}}{2}}\right \rfloor 
\]
and, for $\ell$ large enough, item 2. of Lemma \ref{lem:coarse_degree_A} and $q_A\leq q$ give
\begin{equation}\label{eq:Bl+1}
z_{B\{\ell+1\}}\le 2q+\left \lfloor{ \frac{z_{B\{\ell\}}}{2}}\right \rfloor.
\end{equation}

We can now prove item 1.\ by induction {over} $\ell$.
For $\ell=0$ is trivial. 
For the induction step, inserting the induction assumption $z_{B\{\ell\}}\le \max (2z_{B\{0\}}, 4q )$ in  \eqref{eq:Bl+1}, we have 
\[
z_{B\{\ell+1\}}\le 2q+\left \lfloor{ \frac{\max (2z_{B\{0\}}, 4q )}{2}}\right \rfloor.
\]
Distinguishing the two cases in the maximum:
\begin{itemize}
\item Case $\max (2z_{B\{0\}}, 4q) = 4q$ implies
$z_{B\{\ell+1\}}\le 4q =  \max (2z_{B\{0\}}, 4q )$.
\item Case $\max (2z_{B\{0\}}, 4q) = 2z_{B\{0\}}$ implies
$z_{B\{\ell+1\}}\le 2q+z_{B\{0\}}\le 2\max(2q,z_{B\{0\}}) $.
\end{itemize}
Therefore, the item 1. follows in both cases.

We now prove item 2. exploiting the structure of ${C}\{\ell+1\}$ in \eqref{eq:recursive_AL_elements}. 
Since $f_{C\{\ell+1\}}=\psi(|p_{\hat{C}}|^2f_{\hat{C}\{\ell\}})$, recalling the definition $\hat{C}\{\ell\}$ in \eqref{eq:recursive_AL_elements} 
and applying Lemma \ref{lem:coarse_degree_psi}, we have
\begin{equation}\label{eq:Arico-Donatelli_on_C}
z_{C\{\ell+1\}}< \left \lfloor \frac{2q_{\hat{C}}+\max(z_{C\{\ell\}},2z_{B\{\ell\}}+ z_{A\{\ell\}})}{2}\right \rfloor.
\end{equation}
Moreover, for $\ell$ large enough, from item 1 follows that  $z_{B\{\ell\}}\le \max (2z_{B\{0\}}, 4q )$  while
item 2. of Lemma \ref{lem:coarse_degree_A} implies $z_{A\{\ell\}}\le 2q_{A}\le 2q$. Therefore, inserting these two majorizations in \eqref{eq:Arico-Donatelli_on_C},  we have 
\begin{equation}\label{eq:zCl+1}
z_{C\{\ell+1\}}< q+ \left \lfloor \frac{\max(z_{C\{\ell\}},\max (4z_{B\{0\}}, 8q )+ 2q)}{2}\right \rfloor.
\end{equation}

We can now prove item 2. by induction on $\ell$.
For $\ell=0$ is trivial. 
For the induction step, inserting the induction assumption $z_{C\{\ell\}}\le \max(4 z_{B\{0\}},6q,2z_{C\{0\}})$ in \eqref{eq:zCl+1},
we have
\begin{align*}
z_{C\{\ell+1\}} & < q+ \left \lfloor \frac{\max \left(\max(4 z_{B\{0\}},6q,2z_{C\{0\}}), \,\max (4z_{B\{0\}}, 8q )+ 2q \right)}{2}\right \rfloor\\
&= q+ \max\left( z_{C\{0\}}, 2z_{B\{0\}} + q, 5q\right).
\end{align*}
Distinguishing the three cases in the maximum $\xi =  \max\left( z_{C\{0\}}, 2z_{B\{0\}} + q, 5q\right)$, we have:
\begin{itemize}
\item Case $\xi = z_{C\{0\}}$ implies
$z_{C\{\ell+1\}}\le q + z_{C\{0\}} \leq 2 \max (q, z_{C\{0\}})$.
\item Case $\xi = 2z_{B\{0\}} + q$ implies
$z_{C\{\ell+1\}}\le 2q+2z_{B\{0\}}\le 2\max(2q,2z_{B\{0\}}) $.
\item Case $\xi=5q$ implies $z_{C\{\ell+1\}}\le 6q$.
\end{itemize}
Therefore, combining the three cases, we obtain $z_{C\{\ell+1\}}\le \max(4 z_{B\{0\}},6q,2z_{C\{0\}})$ which is item 2.
\end{proof}

\section{Numerical results}\label{sec:numerical_MGM}

The present section is devoted to the numerical validation of the theoretical results presented in Sections \ref{sec:TGM_conv_circ} and \ref{sec:procedure}. In all the experiments, we use the standard stopping criterion  $\frac{\|r^{(k)}\|_2}{\|\mathbf{b}\|_2}<\epsilon$, where $r^{(k)}=\mathbf{b}-{\mathcal{LAU}}\mathbf{x}^{(k)}$ and $\epsilon=10^{-6}$. The true solution $\mathbf{x}$ of the linear system ${\mathcal{LAU}}\mathbf{x} = \mathbf{b}$ is a uniform sampling of $\sin(t)$ on $[0,\pi]$ and we consider the right-hand side $\mathbf{b}$ defined as $\mathbf{b} = {\mathcal{LAU}}\mathbf{x}$, which automatically ensures that the requirements of Remark \ref{rmk:right-hand} are satisfied. We take the null initial guess. All the tests are performed using MATLAB 2021a and the error equation at the coarsest level is solved with the MATLAB backslash function after the proper projection of the residual into the range of the coefficient matrix. 


Firstly, we investigate the numerical behavior of the TGM applied to the elasticity problem described in Subsection~\ref{sec:numerical_TGM} with $\rho=1/2$. In Table \ref{tab:TGM_circ_1D_compare} we test the efficiency of the TGM  using 1 step of damped Jacobi as post smoother with four different values of $\omega$. The results show that the number of iterations needed for reaching the tolerance $\epsilon$ remains constant or decreases as the matrix size increases, confirming the theoretical optimal convergence rate. 
Moreover, the value $\omega_{\rm opt}$ obtained by minimizing the spectral radius estimate in (\ref{eq:omega}) proves to be the one associated to the minimum number of iterations, when compared to the other values belonging to a uniform sampling in the admissible interval $(0,1)$.

\begin{table}\begin{center}
		\caption{ Two-Grid iterations with different values of the Jacobi relaxation parameter $\omega$ and stopping tolerance $\epsilon= 10^{-6}$. }\label{tab:TGM_circ_1D_compare}
		\begin{tabular}{c|c|c|c|c}		
		$N=2\cdot (2^t) $ & \multicolumn{4}{c}{$\#$ Iterations}\\
			\hline
			{$t$} & {$\omega=1/4$} & {$\omega=1/2$}  &{$\omega_{\rm opt}=55/96$} & {$\omega=3/4$} \\
			\hline
			 9 & 34 & 14 & 12 & 15\\
			10 & 33 & 14 & 12 & 15\\
			11 & 32 & 14 & 11 & 14\\
			12 & 30 & 13 & 11 & 14\\
			13 & 29 & 13 & 11 & 13\\
			14 & 28 & 12 & 10 & 13\\	
		\end{tabular}
		\end{center}
\end{table}

As a second experiment, we numerically validate the convergence results of Theorem \ref{thm:Notay_symbol_wcycle} applying the W-cycle strategy analyzed in Section \ref{sec:procedure}. 
In particular, the matrices at the finest level  $A_{n_0}\{0\}=\mathcal{C}_{n_0}(f_{A\{0\}})$, $C_{n_0}\{0\}=\mathcal{C}_{n_0}(f_{C\{0\}})$,
 $B_{n_0}\{0\}=\mathcal{C}_{n_0}(f_{B\{0\}})$ are the circulant matrices given in equations (\ref{eq:ABC_circ}), with $n_0$ equal to $n=2^t$. 
If we choose $p_{\hat{C}}(\theta) = p_A(\theta)=\sqrt{2}(1+\cos(\theta))$, the hypotheses the Theorem~\ref{thm:Notay_symbol_wcycle} are fulfilled as long as we make a proper choice of the relaxation parameters $\omega_\ell$. In this way, the thesis of Theorem~\ref{thm:Notay_symbol_wcycle} suggests the convergence and optimality of the W-cycle procedure.

Concerning the choice of $\omega_\ell$, we propose an adaptive strategy approximating  at each level $\ell$ the quantities
\begin{equation}\label{eq:estimate_omega_ell}
2 \alpha_\ell-\frac{\alpha_\ell^2}{\hat{a}_0(f_{A\{\ell\}})}\|f_{A\{\ell\}}\|_\infty\,,\quad \hat{a}_0(f_{\hat{C}\{\ell\}})\left\|f_{{C}\{\ell\}}+\frac{|f_{B\{\ell\}}|^2}{f_{{A}\{\ell\}}}\right\|_\infty^{-1}.
\end{equation}
In particular, we exploit the fact that for each $\ell$ we have $f_{A\{\ell\}}(\theta)=2-2\cos(\theta)$ and hence ${\hat{a}_0(f_{A\{\ell\}})}=2$ and, according to formula (\ref{eq:alpha_ell}), $\alpha_\ell=1/2$. The invariance of the generating functions $f_{A\{\ell\}}$ at coarser levels with the choice $p_A(\theta)={\sqrt{2}}(1+\cos(\theta))$ can be derived from Lemma \ref{lem:f_coarse} by direct computation.

For the second quantity in (\ref{eq:estimate_omega_ell}), we read the Fourier coefficients of $f_{{B}\{\ell\}}$ and $f_{{C}\{\ell\}}$ from the first rows and columns of ${{B}\{\ell\}}$ and ${{C}\{\ell\}}$ and, in particular, we compute the Fourier coefficient $ \hat{a}_0(f_{\hat{C}\{\ell\}})$. Moreover, we approximate the $\|f_{{C}\{\ell\}}+\frac{|f_{B\{\ell\}}|^2}{f_{{A}\{\ell\}}}\|_\infty$ choosing the maximum value among its uniform sampling over $[0,\pi]$ with step-size $h=1/100$.

Then, according to (\ref{eq:choice_omega_wcycle}), we set at each level 
\begin{equation}
\label{eq:omega_ell}
\omega_\ell=\min\left(2 \alpha_\ell-\frac{\alpha_\ell^2}{\hat{a}_0(f_{A\{\ell\}})}\|f_{A\{\ell\}}\|_\infty\,,\, \hat{a}_0(f_{\hat{C}\{\ell\}})\left\|f_{{C}\{\ell\}}+\frac{|f_{B\{\ell\}}|^2}{f_{{A}\{\ell\}}}\right\|_\infty^{-1} \right), 
\end{equation}
which is the middle point of the interval of admissible values, inspired by the fact that at the finest level the $\omega_{\rm opt}=55/96$ is close to the middle point of the interval $(0,1)$.

In Table \ref{tab:Vcycle_cir} we report the number of iterations needed by the W-cycle method for reaching the convergence with tolerance $\epsilon$, comparing the previous adaptive choice of the parameters $\omega_\ell$ with the fixed choice of $\omega_\ell=1/2$.  We observe that the adaptive choice of the relaxation parameter at each level permits to obtain a number of W-cycle iterations which remains constant, confirming the theoretical optimal convergence rate. For this example, the choice of the fixed $\omega_\ell=1/2$ for the W-cycle is also valid and it provides the same behavior in terms of iterations. Indeed, the presence of the positive definite mass term $C$ implies that the choice of $\omega_\ell$ depends only on the quantities involving $A\{\ell\}$ that do not change level by level. 
 
 \begin{table}\begin{center}
		\caption{W-cycle iterations with the adaptive choice of $\omega_\ell$ according to Theorem \ref{thm:Notay_symbol_wcycle} and with the fixed choice $\omega_\ell= 1/2$ for all $\ell$.}\label{tab:Vcycle_cir}
		\begin{tabular}{c|c|c}		
		$N=2\cdot (2^t) $ & \multicolumn{2}{c}{$\#$ Iterations}  \\
			\hline
			{$t$} & {adaptive $\omega_\ell$}& {$\omega_\ell=1/2$} \\
			\hline
			 9 & 14 & 14 \\
			10 & 14 & 14 \\
			11 & 14 & 14 \\
			12 & 13 & 13 \\
			13 & 13 & 13 \\
			14 & 12 & 12 \\	
	\end{tabular}
		\end{center}
\end{table}
An analogous behavior, although {a} higher number of iterations,  is obtained using $p_{\hat{C}}\equiv 1$ in the construction of $P_{\hat{C}}$, see column 5 of Table \ref{tab:diff_rho}. Indeed, as already mentioned in Subsection \ref{sec:numerical_TGM}, this choice of $p_{\hat{C}}$ is still acceptable for reasonable large value of $\rho$. Taking smaller values of $\rho$, for example $\rho=1/20,1/200$ corresponds to weaken the positive definiteness of the $C\{0\}$ part. Then, formula (\ref{eq:limit_1_over_fC})    highlights that in this case is crucial to choose $p_{\hat{C}}(\theta)={\sqrt{2}}(1+\cos(\theta))$ in order to numerically satisfy item 2 of Theorem \ref{thm:Notay_symbol}. This can be numerically confirmed comparing columns 2-4 with columns 5-7 of Table \ref{tab:diff_rho}, where we observe a dramatic increasing of the number of iteration needed to achieve the convergence of the W-cycle passing from $p_{\hat{C}}(\theta)={\sqrt{2}}(1+\cos(\theta))$ to  $p_{\hat{C}}(\theta)=1$.


 \begin{table}\begin{center}
		\caption{W-cycle iterations with the adaptive choice of $\omega_\ell$ according to Theorem \ref{thm:Notay_symbol_wcycle}  for different values of $\rho$ and two different projectors by $p_{\hat{C}}(\theta)={\sqrt{2}}(1+\cos(\theta))$ and  $p_{\hat{C}}(\theta)=1$. }\label{tab:diff_rho}
		\begin{tabular}{c|c|c|c|c|c|c}		
		$N=2\cdot (2^t) $ & \multicolumn{3}{c|}{$p_{\hat{C}}(\theta)={\sqrt{2}}(1+\cos(\theta))$} & \multicolumn{3}{c}{$p_{\hat{C}}(\theta)=1$} \\
			\hline
			{$t$} & {$\rho=1/2$}& {$\rho=1/20$} & {$\rho=1/200$}& {$\rho=1/2$} & {$\rho=1/20$}& {$\rho=1/200$} \\
			\hline
			 9 & 14 & 17 & 18 & 24& 105 & 817\\
			10 & 14 & 16 & 18 & 24 & 107 & 842\\
			11 & 14 & 16 & 18 & 24 & 107 & 866\\
			12 & 13 & 16 & 18 & 24 & 107& 890\\
			13 & 13 & 15 & 17 & 24 & 108& 912\\
			14 & 12 & 15 & 17 & 24 & 108 & 932\\	
	
	\end{tabular}
		\end{center}
\end{table}


\section{Saddle point matrices with Toeplitz blocks}\label{sect:toep}
In the present section, we discuss the applicability of our multigrid method. For completeness, we report the definition of Toeplitz matrix generated by a function.

\begin{Definition}\label{def-multilevel}
The Toeplitz matrix associated with $f\in L^1(-\pi,\pi)$ is the matrix of order $n$ given by
\begin{align*}
  T_n(f)=\sum_{|j|<n}\hat a_j(f) J_{n}^{(j)},
\end{align*}
where $\hat a_j(f)$ are the Fourier coefficients of $f$ defined in \eqref{eq:coeff_fourier} and
$J_n^{(j)}$ is the matrix of order $n$ whose $(i,k)$ entry equals~$1$ if $i-k=j$
and zero otherwise.
\end{Definition}

%

\subsection{Multigrid methods for Toeplitz matrices}
\label{ssec:mult_toepl}
The theoretical results of Sections \ref{sec:TGM_conv_circ} and \ref{sec:procedure} can be naturally extended to systems in saddle-point form where the matrices $A$, $B$, $C$ belong to matrix algebras different from the circulant one. Indeed, as stressed in {\cite{AD,ADS}}, it is sufficient to substitute the Fourier transform in (\ref{eq:Circulant_diagonalization}) with the proper unitary transform and to choose the corresponding grid points. In particular, we first assume that $A=\tau_n(f_A)$, $BB^H=\tau_n(|f_B|^2)$, $C=\tau_n(f_C)$ belong to the $\tau$ algebra \cite{MR1311435}. In this case the structure of the grid transfer operators has to slightly change with respect to (\ref{eq:def_cutting_matrix_Kn_circ}) in order to preserve the algebra structure at coarser levels and the dimension of the problem should be $2n$ with odd 
$$n=2^t-1.$$ Then, the $(2^{t-1}-1)\times (2^{t}-1)$  cutting matrix $K_{n}$ takes the form
\[
K_{n} = \left[\begin{array}{ccccccccc}
0 & 1 & 0 & & & & &\\
&   & 0 & 1 & 0 & & & & \\
&   &   &   & \ddots & \ddots & \ddots & & \\
&   &   &   &        &        & 0      & 1 & 0		
\end{array}\right].
\]
Consequently, the grid transfer operators are chosen of the form
$\tau_{n}(p)K_{n}^T$.
Note that if $f_A$, $|f_B|^2$, and $f_C$ are  trigonometric polynomials of  degree  at  most  one, the related Toeplitz matrices belong to the $\tau$-algebra.   If, for instance,  the degree $\delta$ of $p_A$ is  greater than 1, then $T_n(p_A)$ is a Toeplitz matrix which differs from the $\tau$-matrix $\tau_n(p_A)$ for a low rank correction of rank at most  $2(\delta-1)$. 
Therefore, the associated grid transfer operator
\begin{equation}\label{eq:def_projector_pnk_toepl}
P_{A}=T_{n}(p_A)K_{n}^T
\end{equation}
should be adapted whenever $\delta>1$ to preserve the Toeplitz structure at the coarser levels (see \cite{ADS}).

\subsection{Numerical results for the elasticity problem with Toeplitz blocks}
\label{ssec:Num_toepl}
In the present subsection, we consider the problem (\ref{eq:1D_elasticity}) with Dirichlet boundary conditions. In this case, the analogous finite difference discretization with stepsize $h=1/(n+1)$ leads to a linear system with coefficient matrix of the form
\begin{equation} \label{eq:spectrally_analyzed_matrix_toepl}
\mathcal{A} = \mathcal{D}^{(1)}	\begin{bmatrix}
		A & hB^T \\
		hB & -h^2 C
	\end{bmatrix}\mathcal{D}^{(1)} = 	\begin{bmatrix}
		A& B^T \\
		B & -C
	\end{bmatrix},
	\qquad
	\mathcal{D}^{(1)}  = \begin{bmatrix}
		I & O  \\
		O & \frac{1}{h}I \\
	\end{bmatrix},	
\end{equation}
where $A,$ $C$, and $B$ are the Toeplitz matrices
\begin{align}\label{eq:ABC_toepl}
	A = T_n(2-2\cos(\theta)), \quad B = T_n(1-{\rm e}^{\hat{\imath}\theta}), \quad C = T_n\left(\frac{1}{3} (2+\cos(\theta))\right).
\end{align}
 We construct the grid transfer operators according to the $\tau$ algebra requirements, as described in Subsection \ref{ssec:mult_toepl}.  Exploiting the analysis for the circulant case in Subsection \ref{sec:numerical_TGM}, we firstly construct a TGM for $\mathcal{LAU}$ in the Toeplitz setting with a basic scheme, using only one step of damped Jacobi post-smoothing with relaxation parameter  $\omega=55/96$.
 Table \ref{tab:TGM_toep_1D_tol} and Figure \ref{fig:plot_rate_14_toep_TGM} show that the linear convergence independent of the matrix size of the TGM is preserved also in the Toeplitz case.


\begin{table}[ht]
  \begin{minipage}[b]{0.34\linewidth}
    \centering
    		\caption{Toeplitz case: TGM iterations with $\omega= 55/96$. }\label{tab:TGM_toep_1D_tol}
    \begin{tabular}{c|c}		
		$t $ & {$\#$ Iterations}\\
			\hline
			9& 13 \\
			10& 12  \\
			11& 12 \\	
			12& 11 \\
			13& 11 \\	
			14& 11  \\		
	\end{tabular}
  \end{minipage}%
  \begin{minipage}[b]{0.05\linewidth}
  \end{minipage}
  \hfill
  \begin{minipage}[c]{0.55\linewidth}
    \centering
        \captionof{figure}{Convergence rate for TGM method corresponding to $t=14$ of Table \ref{tab:TGM_toep_1D_tol}.}
    \label{fig:plot_rate_14_toep_TGM}
    \includegraphics[width=80mm]{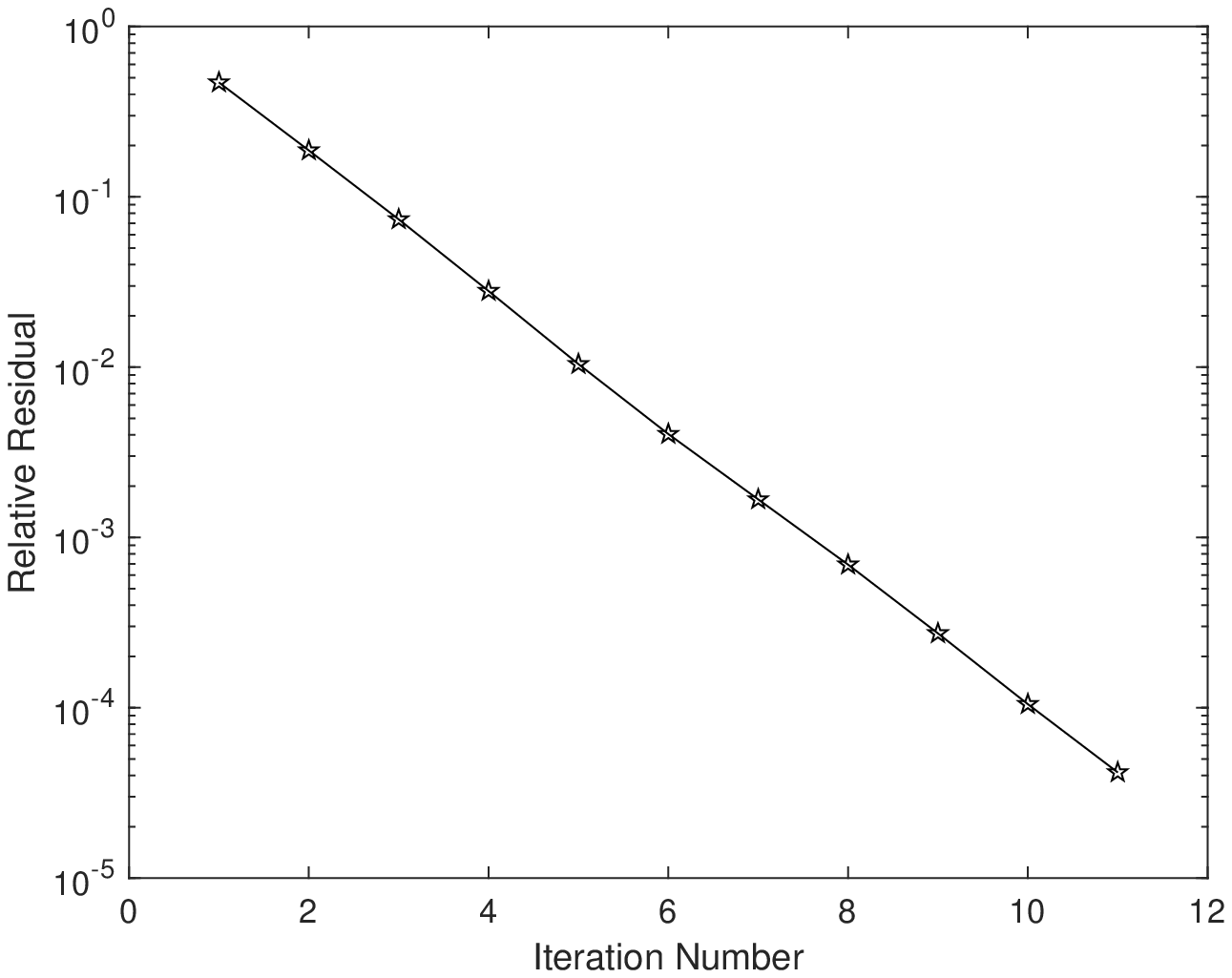}
  \end{minipage}
\end{table}


We conclude the section showing the efficiency of the proposed method in the Toeplitz case with more grids. For the definition of all the objects at coarser levels, we follow the recursive procedure described at the beginning of Section~\ref{sec:procedure}. Here the size of the matrix is $N=2\cdot (2^t-1)$. The grid transfer operators are associated to the trigonometric polynomials $p_{\hat{C}}(\theta) = p_A(\theta)={\sqrt{2}}(1+1\cos(\theta))$ as in Section \ref{sec:numerical_MGM}, while $P_{\hat{C}}$ and $P_A$ are defined according to Section \ref{ssec:mult_toepl}. Finally, the relaxation parameter $\omega_\ell$ at each level is set according to formula~(\ref{eq:omega_ell}).

The possible extension of Theorem \ref{thm:Notay_symbol_wcycle} to the $\tau$ algebra suggests that the convergence and optimality of the W-cycle method can be achieved also with Toeplitz blocks, even though in our case low-rank corrections are present.
Moreover, 
%
a linear convergence independent of the matrix size is obtained for both the W-cycle and V-cycle methods (see Table \ref{tab:vcycle_toep_1D_tol}) with a behavior similar to that of TGM, shown in Table~\ref{tab:TGM_toep_1D_tol}. The linear convergence is shown in Figure \ref{fig:plot_rate_14_toep_Wcycle_Vcycle}.


\begin{table}[ht]
  \begin{minipage}[b]{0.34\linewidth}
    \centering
		\caption{Toeplitz case: W-cycle and V-cycle iterations with the adaptive choice of $\omega_\ell$.}				
		\label{tab:vcycle_toep_1D_tol}
		\begin{tabular}{c|c|c}		
		$t$ & {W-cycle} & {V-cycle} \\
			\hline
			 9 & 13 &  14\\
			10 & 12 &  14\\
			11 & 12 & 14 \\
			12 & 11 & 13 \\
			13 & 11 & 13 \\
			14 & 11 & 13 \\		
	\end{tabular}
  \end{minipage}%
  \begin{minipage}[b]{0.05\linewidth}
  \end{minipage}
  \hfill
  \begin{minipage}[c]{0.55\linewidth}
    \centering
        \captionof{figure}{Convergence rate for W-cycle and V-cycle methods corresponding to $t=14$ of Table \ref{tab:vcycle_toep_1D_tol}.}
    \label{fig:plot_rate_14_toep_Wcycle_Vcycle}
    \includegraphics[width=80mm]{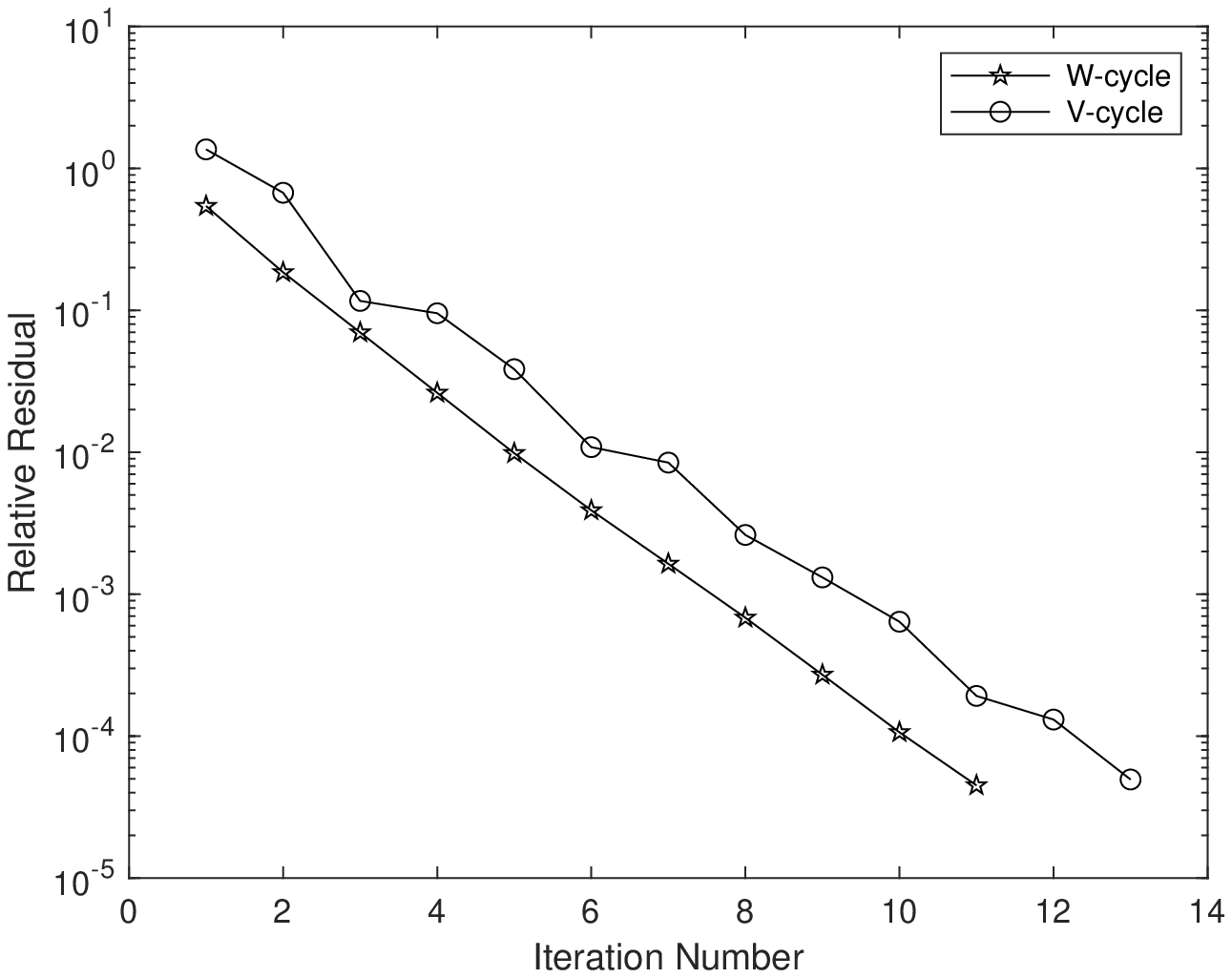}
  \end{minipage}
\end{table}

\section{Conclusions and future work}\label{sect:concl}
Using the results from \cite{MR3439215} we have provided sufficient conditions for the convergence of the TGM in the case of circulant blocks. Further, we have shown that the structure is kept on coarser levels and that the convergence rate is bounded independent from the level. While these are important results for the convergence of the W-cycle method, the convergence of the V-cycle remains open.

Using the analysis based on the generating symbols of the circulant blocks, we have been further able to provide optimal choices for the parameters $\alpha$ and $\omega$ in the left and right preconditioning and in the smoothing, respectively.

Numerically the resulting methods show the expected convergence behavior, demonstrating the validity of our analysis. Further, the W- and the V-cycle converge, as well.

In the future, we will extend the analysis to the multilevel case, where the generating symbols of $A, B$, and $C$ are multi-variate functions. This is of importance for applications that usually are posed in 2D or 3D, resulting in 2- or 3-level circulant or Toeplitz matrices. In this case we will have to consider different sizes of $A$ and $C$ and thus rectangular matrices $B$, as well.

\appendix
\section{Level Independency Proofs}\label{sec:appendix}
\begin{Lemma}\label{lem:Notay_symbol_wcycleb}
Consider the matrices $\mathcal{A}_{2n_{\ell}}\{\ell\}\in \mathbb{C}^{2n_{\ell}\times 2n_{\ell}}$ defined by formulae (\ref{eq:Astep0})-(\ref{eq:recursive_AL_elements}). 
Define for all $\ell$ the grid transfer operators 
\begin{equation*}
\mathcal{P}\{\ell\}=\begin{bmatrix}
P_{A{\{\ell\}}}& \\
& P_{{\hat{C}\{\ell\}}}
\end{bmatrix}, \qquad 
P_{A{\{\ell\}}}=\mathcal{C}_{n_\ell}(p_{A})K_{n_\ell}^T,
\qquad P_{\hat{C}\{\ell\}}=\mathcal{C}_{n_\ell}(p_{\hat{C}})K_{n_\ell}^T.
\end{equation*}
Suppose that $f_{A\{\ell\}}$, $f_{C\{\ell\}}$, $f_{B\{\ell\}}$, $\alpha_\ell$, $p_{A}$ and $p_{\hat{C}}$ fulfil the hypotheses of Theorem \ref{thm:Notay_symbol} with $\theta_0=0$. Moreover, assume that
\begin{equation*}
|p_{A}|^2(\theta)+ |p_{\hat{C}}|^2(\theta+\pi)>0,  \qquad\forall\theta\in [0,2\pi].
\end{equation*}
Then $f_{B\{\ell+1\}}(0)=0$, $f_{B\{\ell+1\}}(\theta)\neq0$ for all $\theta\in (0,2\pi)$ and
\begin{equation}\label{eq:limsup_fB}
\limsup_{\theta\rightarrow 0}\frac{\left|f_{B\{\ell+1\}}(\theta)\right|^2}{\left|f_{B\{\ell\}}(\theta)\right|^2}=c, \quad 0<c<\infty.
\end{equation}
\end{Lemma}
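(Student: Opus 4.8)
The plan is to reduce all three assertions to an explicit formula for the generating function of $B\{\ell+1\}$, obtained from the circulant algebra together with Lemma \ref{lem:f_coarse}, and then to read off the claims from the order of vanishing at $\theta=0$. First I would record that, since $A\{\ell\}$ and $B\{\ell\}$ are circulant and $D_{A\{\ell\}}=\hat a_0(f_{A\{\ell\}})I_{n_\ell}$, the matrix $B\{\ell\}\bigl(I_{n_\ell}-\alpha_\ell D_{A\{\ell\}}^{-1}A\{\ell\}\bigr)$ appearing in \eqref{eq:recursive_AL_elements} is circulant with symbol $\tilde g(\theta)=f_{B\{\ell\}}(\theta)h_\ell(\theta)$, where $h_\ell(\theta)=1-\frac{\alpha_\ell}{\hat a_0(f_{A\{\ell\}})}f_{A\{\ell\}}(\theta)$. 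Since $P_{\hat C\{\ell\}}=\mathcal{C}_{n_\ell}(p_{\hat C})K_{n_\ell}^T$ and $P_{A\{\ell\}}=\mathcal{C}_{n_\ell}(p_A)K_{n_\ell}^T$, applying Lemma \ref{lem:f_coarse} with $p_1=p_{\hat C}$, $f=\tilde g$, $p_2=p_A$ yields
\[
f_{B\{\ell+1\}}(\theta)=\tfrac12\Bigl(\overline{p}_{\hat C}\,p_A\,\tilde g\bigl(\tfrac\theta2\bigr)+\overline{p}_{\hat C}\,p_A\,\tilde g\bigl(\tfrac\theta2+\pi\bigr)\Bigr).
\]

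Next I would extract the vanishing pattern of the projectors from the standing hypotheses. Hypothesis 2 of Theorem \ref{thm:Notay_symbol} at level $\ell$, namely $\limsup_{\theta\to0}|p_A|^2(\theta+\pi)/f_{A\{\ell\}}(\theta)<\infty$ together with $f_{A\{\ell\}}(0)=0$, forces $p_A(\pi)=0$; hypothesis 1 then gives $p_A(0)\neq0$; and evaluating the coupling assumption $|p_A|^2(\theta)+|p_{\hat C}|^2(\theta+\pi)>0$ at $\theta=\pi$ gives $p_{\hat C}(0)\neq0$. With these facts the identity $f_{B\{\ell+1\}}(0)=0$ is immediate from the displayed formula: the first summand carries the factor $\tilde g(0)=f_{B\{\ell\}}(0)h_\ell(0)=0$ (since $f_{B\{\ell\}}(0)=0$), while the second carries the factor $p_A(\pi)=0$.

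I would then treat the limit \eqref{eq:limsup_fB} as an order-of-vanishing statement near $\theta=0$. Writing $\phi=\theta/2$, the leading part of the first summand is $\overline{p}_{\hat C}(0)p_A(0)h_\ell(0)\,f_{B\{\ell\}}(\phi)$, with $h_\ell(0)=1$ and $\overline{p}_{\hat C}(0)p_A(0)\neq0$, so it vanishes exactly to the order of $f_{B\{\ell\}}$ at $0$; the second summand is $O\bigl(p_A(\phi+\pi)\bigr)$ and tends to $0$ because $p_A(\pi)=0$. Using hypothesis 2 and the bound $\limsup_{\theta\to0}|f_{B\{\ell\}}|^2/f_{A\{\ell\}}<\infty$ to compare the orders of $p_A(\cdot+\pi)$ and $f_{B\{\ell\}}$ at the origin, I would verify that the second summand does not outrank the first, so that $f_{B\{\ell+1\}}(\theta)\sim\tfrac12\overline{p}_{\hat C}(0)p_A(0)f_{B\{\ell\}}(\theta/2)$. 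Dividing by $f_{B\{\ell\}}(\theta)$ and using that a zero of fixed order $m$ gives $|f_{B\{\ell\}}(\theta/2)|^2/|f_{B\{\ell\}}(\theta)|^2\to 2^{-2m}$ produces the finite, strictly positive constant $c=\tfrac14|p_{\hat C}(0)|^2|p_A(0)|^2\,2^{-2m}$ in \eqref{eq:limsup_fB}.

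Finally, for non-vanishing on $(0,2\pi)$, I would fix $\phi\in(0,\pi)$, note that neither $f_{B\{\ell\}}(\phi)$ nor $f_{B\{\ell\}}(\phi+\pi)$ vanishes, and reduce $f_{B\{\ell+1\}}(2\phi)=0$ to an exact cancellation between the two summands. I would rule this out using that $h_\ell$ is real and nonnegative, that $|p_A|^2(\phi)+|p_A|^2(\phi+\pi)>0$ and $|p_A|^2(\phi)+|p_{\hat C}|^2(\phi+\pi)>0$ prevent both complex weights from degenerating simultaneously, and a phase argument comparing $f_{B\{\ell\}}(\phi)$ with $f_{B\{\ell\}}(\phi+\pi)$. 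I expect this cancellation analysis to be the main obstacle: because $p_A$ and $p_{\hat C}$ are complex-valued, the two summands are complex, and showing that they cannot sum to zero is the one step that is not a routine order computation; it is exactly here that the coupling hypothesis $|p_A|^2(\theta)+|p_{\hat C}|^2(\theta+\pi)>0$ is indispensable.
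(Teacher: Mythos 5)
Your overall route is the same as the paper's: both arguments start from Lemma \ref{lem:f_coarse} applied to the circulant symbol $\overline{p}_{\hat C}\,p_A\,f_{B\{\ell\}}\bigl(1-\tfrac{\alpha_\ell}{\hat a_0(f_{A\{\ell\}})}f_{A\{\ell\}}\bigr)$, extract $p_A(\pi)=0$, $p_A(0)\neq 0$, $p_{\hat C}(0)\neq 0$ from the hypotheses, and reduce \eqref{eq:limsup_fB} to a local comparison at $\theta=0$. The main structural difference is that the paper squares first, expanding $|f_{B\{\ell+1\}}|^2$ into two squared moduli plus a real cross term (formula \eqref{eq:expression_of_fb}) and bounding the ratio of each of the three terms against $|f_{B\{\ell\}}|^2$ separately, whereas you keep $f_{B\{\ell+1\}}$ itself and claim the asymptotic equivalence $f_{B\{\ell+1\}}(\theta)\sim\tfrac12\overline{p}_{\hat C}(0)p_A(0)f_{B\{\ell\}}(\theta/2)$.

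There are, however, concrete gaps. First, your domination step cannot be completed from the stated assumptions: to show the second summand ``does not outrank'' the first you need $|p_A(\tfrac{\theta}{2}+\pi)|\lesssim|f_{B\{\ell\}}(\theta)|$ near $0$, but hypothesis 2 and the $B$-hypothesis of Theorem \ref{thm:Notay_symbol} give only the upper bounds $|p_A(\theta+\pi)|^2\lesssim f_{A\{\ell\}}(\theta)$ and $|f_{B\{\ell\}}(\theta)|^2\lesssim f_{A\{\ell\}}(\theta)$; chaining them requires the reverse bound $f_{A\{\ell\}}\lesssim|f_{B\{\ell\}}|^2$, which is not among the hypotheses. (The paper's own step \eqref{eq:cond2W} performs exactly this chaining and asserts finiteness ``by the hypotheses,'' so this weakness is shared rather than yours alone, but it means your ``I would verify'' cannot be carried out as described.) Second, even granting domination, ``does not outrank'' gives only big-$O$ control, not equivalence: when the two summands vanish to the same order (which the hypotheses permit), the second alters and, with complex phases, can partially or totally cancel the leading coefficient, so neither $c>0$ nor your closed form $c=\tfrac14|p_{\hat C}(0)|^2|p_A(0)|^2\,2^{-2m}$ follows; the paper makes no equivalence claim and instead deduces $c>0$ from the positive limit of the first term in \eqref{eq:expression_of_fb}. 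Third, and most clearly, the non-vanishing of $f_{B\{\ell+1\}}$ on $(0,2\pi)$ --- one of the three assertions of the lemma --- is left in your proposal as an acknowledged obstacle with only a sketched phase argument; the paper settles it in one line from \eqref{eq:expression_of_fb} together with the sign conditions on $p_A$, $p_{\hat C}$, and $f_{B\{\ell\}}$. Your caution there is not unreasonable (the cross term in \eqref{eq:expression_of_fb} is signed, so the paper's one-line justification is itself terse), but as submitted your attempt does not prove that part of the statement.
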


\begin{proof}
By Lemma \ref{lem:f_coarse}, for each $\ell$ we have 
\begin{equation*}
2f_{B\{\ell+1\}}(\theta)=\left(\overline{p_{\hat{C}}}f_{B\{\ell\}}\left(1-\frac{\alpha_\ell}{\hat{a}_0\left(f_{A\{\ell\}}\right)}f_{A\{\ell\}}\right)p_{A}\right)\left(\frac{\theta}{2}\right)+\left(
	\overline{p_{\hat{C}}}f_{B\{\ell\}}\left(1-\frac{\alpha_\ell}{\hat{a}_0\left(f_{A\{\ell\}}\right)}f_{A\{\ell\}}\right)p_{A}\right)\left(\frac{\theta}{2}+\pi\right).
\end{equation*}
Hence, the function $\left|f_{B\{\ell+1\}}(\theta)\right|^2$ can be written as the sum of three terms:
\begin{small}
\begin{equation}\label{eq:expression_of_fb}
\begin{split}
{\left|f_{B\{\ell+1\}}(\theta)\right|^2} &=\frac{1}{4}\left[ 
\left|f_{B\{\ell\}}\left(\frac{\theta}{2}\right)\right|^2\left|1-\frac{\alpha_\ell}{\hat{a}_0\left(f_{A\{\ell\}}\right)}f_{A\{\ell\}}\left(\frac{\theta}{2}\right)\right|^2\left|p_A\left(\frac{\theta}{2}\right)\right|^2\left|p_{\hat{C}}\left(\frac{\theta}{2}\right)\right|^2+\right.\\
&+ \left|f_{B\{\ell\}}\left(\frac{\theta}{2}+\pi\right)\right|^2\left|1-\frac{\alpha_\ell}{\hat{a}_0\left(f_{A\{\ell\}}\right)}f_{A\{\ell\}}\left(\frac{\theta}{2}+\pi\right)\right|^2\left|p_A\left(\frac{\theta}{2}+\pi\right)\right|^2\left|p_{\hat{C}}\left(\frac{\theta}{2}+\pi\right)\right|^2+\\
&+\left. 2\mathfrak{Re}\left(\overline{f_{B\{\ell\}}\left(\frac{\theta}{2}\right)}f_{B\{\ell\}}\left(\frac{\theta}{2}+\pi\right)
\left|1-\frac{\alpha_\ell}{\hat{a}_0\left(f_{A\{\ell\}}\right)}f_{A\{\ell\}}\left(\frac{\theta}{2}\right)\right|^2
\overline{p_{\hat{C}}\left(\frac{\theta}{2}\right)}p_{\hat{C}}\left(\frac{\theta}{2}+\pi\right)
\overline{p_A\left(\frac{\theta}{2}\right)}p_A\left(\frac{\theta}{2}+\pi\right) \right)\right].
\end{split}
\end{equation}
\end{small}

By assumption we have $f_{B\{\ell\}}(0)=0$, $f_{B\{\ell\}}(\theta)\neq0$ for all $\theta\in (0,2\pi)$, $|p_A|^2(\theta)+ |p_A|^2(\theta+\pi)>0$, $|p_{\hat{C}}|^2(\theta)+ |p_{\hat{C}}|^2(\theta+\pi)>0$ and $|p_A|^2(\theta)+ |p_{\hat{C}}|^2(\theta+\pi)>0$ for all $\,\theta\in [0,2\pi]$ and $p_A(\pi)=0$. These assumptions guarantee that for all  $\theta\in (0,2\pi)$ at least one term of the sum in (\ref{eq:expression_of_fb}) is different from 0. Hence, it is straightforward to check that $f_{B\{\ell+1\}}(0)=0$ and $f_{B\{\ell+1\}}(\theta)>0$ for all $\theta\in (0,2\pi)$.

If we prove that the limit involving each one of the three terms in (\ref{eq:expression_of_fb}) divided by $\left|f_{B\{\ell\}}(\theta)\right|^2$ is finite, then we can exploit the algebraic properties of limits and conclude that the $\limsup$ in (\ref{eq:limsup_fB}) is finite. We have to focus on the following quantities:
\begin{enumerate}
\item For the first term, the only quantity that should be checked is $\frac{\left|f_{B\{\ell\}}\left(\frac{\theta}{2}\right)\right|^2}{\left|f_{B\{\ell\}}(\theta)\right|^2}.$ Indeed, the other terms are proportional to a positive quantity $G_1(\theta)$ such that $0<c_1<G_1(\theta)<c_2<\infty$ in a neighborhood of 0. 
Consequently, we can write
 \[\limsup_{\theta\rightarrow 0}\frac{\left|f_{B\{\ell\}}\left(\frac{\theta}{2}\right)\right|^2}{\left|f_{B\{\ell\}}(\theta)\right|^2}G_1(\theta)
=G_1(0)<\infty.\]
\item For the second term, the quantity that should be checked is the ratio $\limsup_{\theta\rightarrow 0}\frac{\left|p_{A}\left(\frac{\theta}{2}+\pi\right)\right|^2}{\left|f_{B\{\ell\}}(\theta)\right|^2}$, since the other terms are proportional to a positive quantity $G_2(\theta)$ with $0<c_1<G_2(\theta)<c_2<\infty$ in a neighbourhood of 0. Consequently, we can write
\begin{equation}\label{eq:cond2W}
\limsup_{\theta\rightarrow 0}\frac{\left|p_{A}\left(\frac{\theta}{2}+\pi\right)\right|^2}{\left|f_{B\{\ell\}}(\theta)\right|^2}G_2(\theta)
=\limsup_{\theta\rightarrow 0}\frac{\left|p_{A}\left(\frac{\theta}{2}+\pi\right)\right|^2}{f_{A\{\ell\}}(\theta)}\frac{f_{A\{\ell\}}(\theta)}{\left|f_{B\{\ell\}}(\theta)\right|^2}G_2(\theta)
\end{equation}
and the finiteness of this $\limsup$ is implied by the hypotheses of Theorem \ref{thm:Notay_symbol}.
\item For the third term,
we bound it from above with its modulus and then it suffices to prove that 
\[
\limsup_{\theta\rightarrow 0}\frac{\left|f_{B\{\ell\}}\left(\frac{\theta}{2}\right)\right|}{\left|f_{B\{\ell\}}(\theta)\right|}\frac{\left|p_{A}(\frac{\theta}{2}+\pi)\right|}{\left|f_{B\{\ell\}}(\theta)\right|}G_3(\theta)=G_3(0)\limsup_{\theta\rightarrow 0}\frac{\left|p_{A}(\frac{\theta}{2}+\pi)\right|}{\left|f_{B\{\ell\}}(\theta)\right|}
\]
with $0<c_1<G_3(\theta)<c_2<\infty$ in a neighbourhood of 0.
The previous limit is finite thanks to \eqref{eq:cond2W}.
\end{enumerate}

We conclude the proof by highlighting that the limit in Item 1. is different from 0, which implies that the $\limsup$ in (\ref{eq:limsup_fB}) is different from 0.
\end{proof}

A further result can be proven for the symbols $f_{A\{\ell\}}$, $f_{{C}\{\ell\}}$ and $f_{\hat{C}\{\ell\}}$.

\begin{Lemma}\label{lem:Notay_symbol_wcycle_C}
Consider the matrices $\mathcal{A}_{2n_{\ell}}\{\ell\}\in \mathbb{C}^{2n_{\ell}\times 2n_{\ell}}$ defined by formulae (\ref{eq:Astep0})-(\ref{eq:recursive_AL_elements}). 
Define for all $\ell$ the grid transfer operators 
\begin{equation*}
\mathcal{P}\{\ell\}=\begin{bmatrix}
P_{A{\{\ell\}}}& \\
& P_{{\hat{C}\{\ell\}}}
\end{bmatrix}, \qquad 
P_{A{\{\ell\}}}=\mathcal{C}_{n_\ell}(p_{A})K_{n_\ell}^T,
\qquad P_{\hat{C}\{\ell\}}=\mathcal{C}_{n_\ell}(p_{\hat{C}})K_{n_\ell}^T.
\end{equation*}
Suppose that $f_{A\{\ell\}}$, $f_{C\{\ell\}}$, $f_{B\{\ell\}}$, $\alpha_\ell$, $p_{A}$ and $p_{\hat{C}}$ fulfil the hypotheses of Theorem \ref{thm:Notay_symbol} with $\theta_0=0$. Moreover, assume that
\begin{equation*}
|p_{A}|^2(\theta)+ |p_{\hat{C}}|^2(\theta+\pi)>0,  \qquad\forall\theta\in [0,2\pi].
\end{equation*}
Then $f_{A\{\ell+1\}}(0)=0$, $f_{A\{\ell+1\}}(\theta)>0$ for all $\theta\in (0,2\pi)$ and
\begin{equation}\label{eq:limsup_fA}
\limsup_{\theta\rightarrow 0}\frac{f_{A\{\ell\}}(\theta)}{f_{A\{\ell+1\}}(\theta)}=c, \quad 0<c<\infty.
\end{equation}
Moreover, the symbol $f_{{C}\{\ell+1\}}(\theta)$ associated with ${C}\{\ell+1\}$ is non-negative and
\begin{equation}\label{eq:limsup_fC}
\limsup_{\theta\rightarrow 0}\frac{f_{\hat{C}\{\ell\}}(\theta)}{f_{\hat{C}\{\ell+1\}}(\theta)}<\infty.
\end{equation}
\end{Lemma}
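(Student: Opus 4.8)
The plan is to push everything down to the level of generating functions and then lean on the zero-preservation part of Lemma~\ref{lem:f_coarse}. Since $A\{\ell+1\}=P_{A\{\ell\}}^T A\{\ell\}P_{A\{\ell\}}$ and $C\{\ell+1\}=P_{\hat C\{\ell\}}^T\hat C\{\ell\}P_{\hat C\{\ell\}}$ with $P_{A\{\ell\}}=\mathcal{C}_{n_\ell}(p_A)K_{n_\ell}^T$ and $P_{\hat C\{\ell\}}=\mathcal{C}_{n_\ell}(p_{\hat C})K_{n_\ell}^T$, applying Lemma~\ref{lem:f_coarse} with $p_1=p_2=p_A$ and then $p_1=p_2=p_{\hat C}$ yields the compact formulas
\[
f_{A\{\ell+1\}}=\psi\!\left(|p_A|^2 f_{A\{\ell\}}\right),\qquad f_{C\{\ell+1\}}=\psi\!\left(|p_{\hat C}|^2 f_{\hat C\{\ell\}}\right),
\]
with $\psi$ as in \eqref{eq:psi_g}. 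I would record at the outset one consequence of the hypotheses that drives the whole argument: condition~2 of Theorem~\ref{thm:Notay_symbol} applied at the zero $\theta_0=0$ of $f_{A\{\ell\}}$ forces $p_A(\pi)=0$, whence the extra assumption $|p_A|^2(\theta)+|p_{\hat C}|^2(\theta+\pi)>0$ evaluated at $\theta=\pi$ gives $|p_{\hat C}(0)|^2>0$.

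For the assertions on $f_{A\{\ell+1\}}$ I would invoke the zero-preservation part of Lemma~\ref{lem:f_coarse}: since $p_A$ satisfies conditions~1 and~2 of Lemma~\ref{lem:cristina} relative to $f_{A\{\ell\}}$ and $0$ is a zero of $f_{A\{\ell\}}$, the point $2\cdot 0\bmod 2\pi=0$ is a zero of $f_{A\{\ell+1\}}$ of the \emph{same order}. Strict positivity on $(0,2\pi)$ follows by writing $f_{A\{\ell+1\}}(\theta)=\tfrac12|p_A(\theta/2)|^2 f_{A\{\ell\}}(\theta/2)+\tfrac12|p_A(\theta/2+\pi)|^2 f_{A\{\ell\}}(\theta/2+\pi)$ and noting that for $\theta\in(0,2\pi)$ both arguments $\theta/2$ and $\theta/2+\pi$ avoid the unique zero of $f_{A\{\ell\}}$, while condition~1 forces at least one weight $|p_A|^2$ to be nonzero. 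Finally, equality of the orders means $f_{A\{\ell\}}(\theta)\sim a\,\theta^{2\mu}$ and $f_{A\{\ell+1\}}(\theta)\sim b\,\theta^{2\mu}$ near $0$ with $a,b>0$, which is exactly \eqref{eq:limsup_fA} with $c=a/b\in(0,\infty)$.

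For the $C$-part, nonnegativity of $f_{C\{\ell+1\}}=\psi(|p_{\hat C}|^2 f_{\hat C\{\ell\}})$ is immediate once $f_{\hat C\{\ell\}}\ge0$, which holds by the level-$\ell$ form of \eqref{eq:f_Chat}: the factor $2-\tfrac{\alpha_\ell}{\hat a_0(f_{A\{\ell\}})}f_{A\{\ell\}}$ is positive by Lemma~\ref{lem:alpha}, and $f_{C\{\ell\}},|f_{B\{\ell\}}|^2\ge0$. For the limit \eqref{eq:limsup_fC} I would avoid any exact comparison and only use one-sided bounds. The level-$(\ell+1)$ form of \eqref{eq:f_Chat} gives $f_{\hat C\{\ell+1\}}\ge f_{C\{\ell+1\}}$, because the added term is nonnegative (here $\alpha_{\ell+1}$ from \eqref{eq:alpha_ell} lies in the admissible interval of Lemma~\ref{lem:alpha}), and dropping one nonnegative summand of $f_{C\{\ell+1\}}$ yields $f_{C\{\ell+1\}}(\theta)\ge\tfrac12|p_{\hat C}(\theta/2)|^2 f_{\hat C\{\ell\}}(\theta/2)$. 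Combining,
\[
\frac{f_{\hat C\{\ell\}}(\theta)}{f_{\hat C\{\ell+1\}}(\theta)}\le\frac{2}{|p_{\hat C}(\theta/2)|^2}\cdot\frac{f_{\hat C\{\ell\}}(\theta)}{f_{\hat C\{\ell\}}(\theta/2)},
\]
and I would conclude by letting $\theta\to0$: the first factor tends to $2/|p_{\hat C}(0)|^2<\infty$ by the preliminary observation, and the second is bounded since $f_{\hat C\{\ell\}}$ is a nonnegative trigonometric polynomial with an even-order zero at $0$ (ratio $\to 2^{2\nu}$ for a zero of order $2\nu$) or is positive there (ratio $\to1$).

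The main obstacle, and the step deserving the most care, is precisely this last limit: $\hat C\{\ell+1\}$ is \emph{not} the Galerkin coarsening of $\hat C\{\ell\}$ but is rebuilt from $C\{\ell+1\},B\{\ell+1\},A\{\ell+1\}$ through \eqref{eq:recursive_AL_elements}, so the clean same-order conclusion of Lemma~\ref{lem:f_coarse} is unavailable. The workaround trades exactness for the inequality $f_{\hat C\{\ell+1\}}\ge f_{C\{\ell+1\}}$ and hinges on $|p_{\hat C}(0)|>0$; I would double-check that the crude estimate $f_{C\{\ell+1\}}(\theta)\ge\tfrac12|p_{\hat C}(\theta/2)|^2 f_{\hat C\{\ell\}}(\theta/2)$ truly suffices, which it does because \eqref{eq:limsup_fC} only asks for an upper bound on the ratio, so no control of the zero order is needed here, in contrast with \eqref{eq:limsup_fA}.
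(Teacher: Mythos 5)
Your proof is correct. For the statements on $f_{A\{\ell+1\}}$ and the nonnegativity of $f_{C\{\ell+1\}}$ you follow essentially the same path as the paper (Lemma \ref{lem:f_coarse} plus positivity of the factor $2-\frac{\alpha_\ell}{\hat a_0(f_{A\{\ell\}})}f_{A\{\ell\}}$), but for the bound \eqref{eq:limsup_fC} you take a genuinely different route. The paper writes $f_{\hat C\{\ell\}}=f_{C\{\ell\}}+g_1|f_{B\{\ell\}}|^2$ and $f_{\hat C\{\ell+1\}}=f_{C\{\ell+1\}}+g_2|f_{B\{\ell+1\}}|^2$, splits the ratio accordingly, and bounds the two terms separately: the $C$-term via the zero-order-preservation part of Lemma \ref{lem:f_coarse} applied to $f_{\hat C\{\ell\}}$ (whose Galerkin coarsening is exactly $f_{C\{\ell+1\}}$), and the $B$-term via the limit \eqref{eq:limsup_fB} of Lemma \ref{lem:Notay_symbol_wcycleb}, so the paper's proof of this lemma depends on the companion lemma for $f_B$. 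You instead use the single chain $f_{\hat C\{\ell+1\}}\ge f_{C\{\ell+1\}}=\psi\left(|p_{\hat C}|^2 f_{\hat C\{\ell\}}\right)\ge\frac12\left|p_{\hat C}(\theta/2)\right|^2 f_{\hat C\{\ell\}}(\theta/2)$, the observation $|p_{\hat C}(0)|>0$ (correctly extracted from the extra hypothesis evaluated at $\theta=\pi$, since condition 2 of Theorem \ref{thm:Notay_symbol} forces $p_A(\pi)=0$), and the elementary limit $f_{\hat C\{\ell\}}(\theta)/f_{\hat C\{\ell\}}(\theta/2)\to 2^{2\nu}$ (or $1$). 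This buys independence from Lemma \ref{lem:Notay_symbol_wcycleb} and from the order-preservation statement, and it pinpoints exactly where the assumption $|p_A|^2(\theta)+|p_{\hat C}|^2(\theta+\pi)>0$ enters; the paper's version, in exchange, reuses machinery already set up for $f_B$ and treats the $C$ and $B$ contributions symmetrically. Two minor points to tighten: the admissibility of $\alpha_{\ell+1}$ that you invoke is not needed in full strength, since for a limsup as $\theta\to0$ the factor $2-\frac{\alpha_{\ell+1}}{\hat a_0(f_{A\{\ell+1\}})}f_{A\{\ell+1\}}(\theta)$ is positive near $0$ for any $\alpha_{\ell+1}>0$ because $f_{A\{\ell+1\}}(0)=0$ (the paper makes the same global positivity assertion, so this is not a gap); and the denominators in your chain are nonzero for small $\theta\neq0$ because $f_{\hat C\{\ell\}}\ge g_1|f_{B\{\ell\}}|^2>0$ on $(0,2\pi)$, a fact worth stating explicitly.
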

\begin{proof}
By Lemma \ref{lem:f_coarse}, we have 
\begin{equation*}
f_{A\{\ell+1\}}(\theta)=\frac{1}{2}\left(|p_A|^2f_{A\{\ell\}}\left(\frac{\theta}{2}\right)+
	|p_A|^2f_{A\{\ell\}}\left(\frac{\theta}{2}+\pi\right)\right).
\end{equation*}
Moreover, by assumption we have $f_{A\{\ell\}}(0)=0$, $f_{A\{\ell\}}(\theta)>0$ for all $\theta\in (0,2\pi)$, $|p_A|^2(\theta)+ |p_A|^2(\theta+\pi)>0$ for all $\,\theta\in [0,2\pi]$ and $p_A(\pi)=0$. Hence, as a consequence of Lemma \ref{lem:f_coarse}, we have $f_{A\{\ell+1\}}(0)=0$, $f_{A\{\ell+1\}}(\theta)>0$ for all $\theta\in (0,2\pi)$, and
\begin{equation*}
\limsup_{\theta\rightarrow 0}\frac{f_{A\{\ell\}}(\theta)}{f_{A\{\ell+1\}}(\theta)}=c, \quad 0<c<\infty.
\end{equation*}

Concerning $f_{\hat{C}\{\ell+1\}}$, we exploit the two definitions
\[
f_{\hat{C}\{\ell\}}(\theta)=f_{{C}\{\ell\}}(\theta)+ \frac{\alpha_{\ell}\left|f_{{B}\{\ell\}} \right|^2(\theta)}{\hat{a}_0\left(f_{{A}\{\ell\}}\right)}  \left(2
-\frac{\alpha_{\ell}}{\hat{a}_0\left(f_{{A}\{\ell\}}\right)}f_{{A}\{\ell\}}(\theta)\right)
=
f_{{C}\{\ell\}}(\theta)+ g_{1}(\theta)\left|f_{{B}\{\ell\}} (\theta)\right|^2
\]
and
\begin{equation}
\label{eq:expression_fhat_C_lplus1}
f_{\hat{C}\{\ell+1\}}(\theta)=f_{{C}\{\ell+1\}}(\theta)+ \frac{\alpha_{\ell+1}\left|f_{{B}\{\ell+1\}} \right|^2(\theta)}{\hat{a}_0\left(f_{{A}\{\ell+1\}}\right)}  \left(2
-\frac{\alpha_{\ell+1}}{\hat{a}_0\left(f_{{A}\{\ell+1\}}\right)}f_{{A}\{\ell+1\}}(\theta)\right)
=f_{{C}\{\ell+1\}}(\theta)+ g_{2}(\theta)\left|f_{{B}\{\ell+1\}} (\theta)\right|^2.
\end{equation}
The functions $g_1(\theta)$ and $g_2(\theta)$ are such that $0<c_1<g_1(\theta),g_2(\theta)<c_2<\infty$ in a neighborhood of 0 since $f_{{A}\{\ell\}}$ and $f_{{A}\{\ell+1\}}$ are non-negative and the $\alpha_{\ell}$ are chosen such that the function that maps $\theta$ into $2-\frac{\alpha_{\ell}}{\hat{a}_0\left(f_{{A}\{\ell\}}\right)}f_{{A}\{\ell\}}(\theta)$ is strictly positive for all $\ell$.

Note that $f_{{C}\{\ell\}}(\theta)\ge0$ by assumption, which implies that $f_{\hat{C}\{\ell\}}(\theta)$ is non-negative.
From Lemma \ref{lem:f_coarse} 
\begin{equation*}
f_{{C}\{\ell+1\}}(\theta)=\frac{1}{2}\left(|p_{\hat{C}}|^2f_{\hat{C}\{\ell\}}\left(\frac{\theta}{2}\right)+
|p_{\hat{C}}|^2f_{\hat{C}\{\ell\}}\left(\frac{\theta}{2}+\pi\right)\right)
\end{equation*}
which is a non-negative trigonometric polynomial.

In order to prove the bound in (\ref{eq:limsup_fC}) we exploit the expression of $\hat{C}\{\ell\}$ and $\hat{C}\{\ell+1\}$. In particular, 
\begin{align*}
\limsup_{\theta\rightarrow 0}\frac{f_{\hat{C}\{\ell\}}(\theta)}{f_{\hat{C}\{\ell+1\}}(\theta)}&=
\limsup_{\theta\rightarrow 0}\frac{f_{{C}\{\ell\}}(\theta)+ g_{1}(\theta)\left|f_{{B}\{\ell\}} (\theta)\right|^2}
{f_{{C}\{\ell+1\}}(\theta)+ g_{2}(\theta)\left|f_{{B}\{\ell+1\}} (\theta)\right|^2}\\
&=
\limsup_{\theta\rightarrow 0}\frac{f_{{C}\{\ell\}}(\theta)}
{f_{{C}\{\ell+1\}}(\theta)+ g_{2}(\theta)\left|f_{{B}\{\ell+1\}} (\theta)\right|^2}+
\limsup_{\theta\rightarrow 0}\frac{g_{1}(\theta)\left|f_{{B}\{\ell\}} (\theta)\right|^2}
{f_{{C}\{\ell+1\}}(\theta)+ g_{2}(\theta)\left|f_{{B}\{\ell+1\}} (\theta)\right|^2}.
\end{align*}
Now, we prove that both limits are finite. Concerning the first term, we have
\begin{align*}
\limsup_{\theta\rightarrow 0}\frac{f_{{C}\{\ell\}}(\theta)}{f_{{C}\{\ell+1\}}(\theta)+g_{2}(\theta)\left|f_{{B}\{\ell+1\}} (\theta)\right|^2}
&\le \limsup_{\theta\rightarrow 0}\frac{f_{{C}\{\ell\}}(\theta)}{f_{{C}\{\ell+1\}}(\theta)}\\
&\le \limsup_{\theta\rightarrow 0}\frac{2f_{{C}\{\ell\}}(\theta)}{|p_{\hat{C}}|^2f_{{C}\{\ell\}}\left(\frac{\theta}{2}\right)+
|p_{\hat{C}}|^2f_{{C}\{\ell\}}\left(\frac{\theta}{2}+\pi\right)+g_3(\theta)}<\infty
\end{align*}
where 
 $g_3(\theta)=|p_{\hat{C}}|^2g_1 |f_{{B}\{\ell\}}|^2\left(\frac{\theta}{2}\right)+|p_{\hat{C}}|^2g_1 |f_{{B}\{\ell\}}|^2\left(\frac{\theta}{2}+\pi\right)$  is a non-negative function and the latter bound is given by the result of Lemma \ref{lem:f_coarse}.
 
Finally, concerning the second term, the bound  
\begin{align*}
\limsup_{\theta\rightarrow 0}\frac{g_{1}(\theta)\left|f_{{B}\{\ell\}} (\theta)\right|^2}
{f_{{C}\{\ell+1\}}(\theta)+ g_{2}(\theta)\left|f_{{B}\{\ell+1\}} (\theta)\right|^2}
\le \limsup_{\theta\rightarrow 0}\frac{g_{1}(\theta)\left|f_{{B}\{\ell\}} (\theta)\right|^2}
{g_{2}(\theta)\left|f_{{B}\{\ell+1\}} (\theta)\right|^2}<\infty
\end{align*}
follows by the fact that $f_{\hat{C}\{\ell+1\}}$ is non-negative and by equation (\ref{eq:limsup_fB}) in Lemma \ref{lem:Notay_symbol_wcycle}.
\end{proof}

\section*{Acknowledgements}
The work of Marco Donatelli, Paola Ferrari, Isabella Furci is partially supported by Gruppo Nazionale per il Calcolo Scientifico (GNCS-INdAM). Moreover, the work of Isabella Furci was also supported by the Young Investigator Training Program 2020 (YITP 2019) promoted by ACRI.

\bibliographystyle{plain}
\bibliography{biblio}

\end{document}